\newtheorem{remark}{Remark}[section]
\def\qed{\hfill $\diamond$}
\newcommand{\df}{\coloneqq}             
\DeclareMathOperator{\Exp}{\mathbb{E}}  
\DeclareMathOperator{\Pm}{\mathbb{P}} 
\newcommand{\sB}{{\mathscr{B}}}         
\newcommand{\RR}{\mathbb{R}}            
\newcommand{\Rd}{\mathbb{R}^d}          
\newcommand{\NN}{\mathbb{N}}            
\newcommand{\ZZ}{\mathbb{Z}}            
\newcommand{\sF}{{\mathfrak{F}}}   
\newcommand{\transp}{^{\mathsf{T}}}
\newcommand{\Act}{\mathbb{U}}           
\newcommand{\Uadm}{\mathfrak U} 
\newcommand{\Um}{\mathfrak U_{\mathsf{m}}}   
\newcommand{\Usm}{\mathfrak U_{\mathsf{sm}}}  
\newcommand{\Urc}{\mathfrak U_{\mathsf{RC}}} 
\newcommand{\Uwrc}{\mathfrak U_{\mathsf{WRC}}} 
\newcommand{\Uws}{\mathfrak U_{\mathsf{WS}}} 
\newcommand{\Udws}{\mathfrak U_{\mathsf{DWS}}} 
\newcommand{\cJ}{{\mathcal{J}}}  
\newcommand{\Prob}{{\mathcal{P}}}         
\newcommand{\grad}{\nabla}
\newcommand{\cB}{{\mathcal{B}}}  
\newcommand{\cC}{{\mathcal{C}}}   
\newcommand{\cD}{\mathcal{D}} 
\newcommand{\sL}{{\mathscr{L}}}  
\DeclareMathOperator*{\trace}{Tr}
\newcommand{\abs}[1]{\lvert#1\rvert}
\newcommand{\norm}[1]{\lVert#1\rVert}
\definecolor{dmagenta}{rgb}{.4,.1,.5}
\definecolor{dblue}{rgb}{.0,.0,.5}
\definecolor{mblue}{rgb}{.0,.0,.7}
\definecolor{ddblue}{rgb}{.0,.0,.4}
\definecolor{dred}{rgb}{.7,.0,.0}
\definecolor{dgreen}{rgb}{.0,.5,.0}
\definecolor{Eeom}{rgb}{.0,.0,.5}
\begin{document}

\title{Controlled Diffusions under Full, Partial and Decentralized Information: Existence of Optimal Policies and Discrete-Time Approximations}



\author{Somnath Pradhan\footnotemark[1]
\and Serdar Y\"{u}ksel\footnotemark[2] \ \footnotemark[3]}
\footnotetext[1]{Department of Mathematics,
Indian Institute of Science Education and Research Bhopal, Bhopal, MP - 462066, India (\email{somnath@iiserb.ac.in})}
\footnotetext[2]{Department of Mathematics and Statistics, Queen's University, Kingston K7L 3N6, ON, Canada, (\email{yuksel@queensu.ca})}
\footnotetext[3]{The research of S.Y. was partially supported by the Natural Sciences and Engineering Research Council of Canada (NSERC)}

%



\maketitle

\begin{abstract}
We present existence and discrete-time approximation results on optimal control policies for continuous-time stochastic control problems under a variety of information structures. These include fully observed models, partially observed models and multi-agent models with decentralized information structures. While there exist comprehensive existence and approximations results for the fully observed setup in the literature, few prior research exists on discrete-time approximation results for partially observed models. For decentralized models, even existence results have not received much attention except for specialized models and approximation has been an open problem. Our existence and approximations results lead to the applicability of well-established partially observed Markov decision processes and the relatively more mature theory of discrete-time decentralized stochastic control to be applicable for computing near optimal solutions for continuous-time stochastic control.
\end{abstract}

\begin{keyword}
Controlled diffusions, Partially observable, Decentralized information, Discrete-time approximation, Team optimality.  
\end{keyword}

\begin{AMS}
primary: 93E20, 93A14, 91A15 secondary: 49K45, 60J60
\end{AMS}

\section{Introduction}

In this paper, we will present existence and discrete-time approximation results on optimal control policies for continuous-time stochastic control under a variety of information structures through a unified perspective. These will include fully observed models, partially observed models and multi-agent models with decentralized information structures. 

In the continuous-time theory, the analysis in the literature has primarily been restricted to fully observed and, to some extent, partially observed models: For fully observed models, there is an extensive literature, and we refer the reader to \cite{kushner1990numerical,kushner2001numerical, kushner2012weak} \cite{BJ-06, JPR-19P, KN99A, KN98A, KN2000A} for a detailed analysis and review. We will relax some of the technical regularity conditions studied in these papers. For partially observed models \cite{FlPa82} has arrived at existence results via {\it belief}-separation, and \cite{bismut1982partially} has arrived at existence results, through an approach which avoids separation. These papers adopted a {\it wide-sense admissible control policy} framework (see \cite[Definition 1.3]{bismut1982partially} and \cite[p. 264 and Lemma 2.3]{FlPa82}) which essentially places a {\it Young topology} on the space of measurement and control action processes with a fixed, Brownian, marginal on the measurement process and with a martingale condition on the measurements, given the past control and measurements, ensuring that the control policies are non-anticipative. We note that the topology adopted in \cite{kushner1990numerical,kushner2001numerical,kushner2012weak}, where one considers a Young topology (instead of the joint product space of measurements and actions) on the time and action spaces with the random measures defined on this space having a fixed marginal on the time variable, and the measures being independent of future driving noise invites a discrete-time approximation more naturally (compared with the formulation \cite{bismut1982partially}\cite{FlPa82}) since time appears as a separate parameter. For uncontrolled partially observable continuous-time models, the authors in  \cite{MasiRung81,di1982approximation, KHJFil79}, established discrete-time approximation results\,. In particular, \cite{MasiRung81,di1982approximation, KHJFil79} prove that the discrete-time model weakly converges to the continuous-time model as the parameter of the discretization approaches to zero\,. By using an updating/prediction procedure, a similar discretization approach is proposed in \cite{MRTSAP09} and used to study an allocation problems in finance in the context of a non-stationary economy. Again, for the uncontrolled model, in \cite{Crisan2022} the authors studied a high order approximations of the solution of the stochastic filtering problem and derived their path-wise representation\,. Also, they established the robustness property of the discretized solutions\,.  


While existence results for fully observed and partially observed models are well established, for decentralized information structures few research results have been reported. In particular, we will establish existence of optimal policies and near optimality of discrete-time approximations with a unified perspective. Utilizing Girsanov's change of measure technique as presented in \cite{benevs1971existence} and an accompanying continuity result \cite[p. 455]{benevs1971existence} that incorporates more stringent control topology and convexity requirements, a pertinent area of research within the continuous-time decentralized framework is represented by the studies \cite{charalambous2016decentralized,charalambous2014equivalence,charalambous2014maximum}. The change of measure arguments, which were adopted by Witsenhausen in discrete-time decentralized stochastic control \cite{wit88} (see reviews and further generalizations in \cite{hogeboom2021sequential},\cite[Chapter 3]{YukselBasarBook},\cite{YukselWitsenStandardArXiv}), was studied in continuous-time stochastic control in these studies. For continuous-time stochastic differential decision system with decentralized noisy information structure, in \cite{CharalambousA13}, the authors established existence of a team optimal policy by applying Girsanov's change of measure argument; where, as an intermediate step the Radon-Nikodym derivative with respect to a reference measure is shown to be continuous as a function of policy under the weak* topology (see \cite[Lemma~1]{CharalambousA13})\, where the measurement functions $h^i$ are uniformly Lipschitz over $\Rd$ and a global growth condition is required, with both the diffusion matrix $\upsigma$ and the the measurement functions $h^i$ allowed to be functions of control policies as well. Different from these studies, we consider complementary system models, present a different control topology through which we establish the key result on the continuity and approximation of the Radon-Nikodym derivative in control policies (which was proved in \cite[Lemma~1]{charalambous2016decentralized}, using $L^2$ convergence of the policies, under the assumption that the information structures are not corrupted by external noise processes). Additionally, owing to the flexibility of the topology we consider, we establish near optimality of discrete-time models (which was not studied in the aforementioned studies). We also consider models with global state dynamics and local measurements, as well as local states with dynamics coupling. Further recent studies on continuos-time stochastic team problems include \cite{huang2022general,jackson2023approximately}, with the former being on the LQG setup and the latter studying near optimality of decentralized control policies in the limit of large numbers of agents.

Thus, we present an alternative, probabilistic, unified approach on both existence as well as approximation results for such problems in a large generality of information structures and system models. 

Our analysis allows one to utilize the relatively more mature theory of decentralized stochastic control in discrete-time: There is an established theory on existence, structure, approximation, and learning theory for optimal discrete-time decentralized stochastic control:

Witsenhausen \cite{wit88} and Ho and Chu \cite{HoChu}  have presented complementary methods for reducing a dynamic decentralized stochastic control problem involving multiple agents to one which has static information, that is, one in which no agent's information is affected by the policies of any other agent. Building on such static reduction, \cite[Theorem 5.2]{YukselWitsenStandardArXiv}, \cite[p. 1691]{gupta2014existence}, \cite[Theorem 2.9]{YukselSaldiSICON17}, \cite[Theorem 5.6]{YukselWitsenStandardArXiv} have obtained general existence results in such problems. A key aspect is with regard to the weak sequential compactness properties of strategic measures facilitated by static reduction \cite{saldiyukselGeoInfoStructure}, which leads to both existence results as well as near optimality results for solutions obtained by finite model approximations \cite{saldiyuksellinder2017finiteTeam}. Thus, discrete-time approximations for a continuous-time problem facilitate numerical and learning theoretic results. The same comment applies for partially observed models, as there exist many analytical, numerical and effective learning theoretic methods for discrete-time POMDPs (see e.g. \cite{kara2021convergence} and the references therein for a comprehensive review).


{\bf Contributions and Main Results.}
\begin{itemize}
\item[(i)] We present a unified treatment of existence and discrete-time approximation results on optimal control policies for continuous-time stochastic control problems under a variety of information structures. For fully observed models, we first review the {\it weak convergence} approach presented in the literature by various authors, and notably Kushner and colleagues, though with some relaxations (such as on the absence of Lipschitz continuity conditions on the drift term) and present an existence and approximation result in Theorem \ref{existencePiece-wise}. We then present an alternative approach where measure transformation is adopted in Theorem \ref{existencewithMeasTransformation}. This leads to a method for both existence and approximations to be utilized for stochastic control under more general information structures as noted further below, and with complementary conditions (such as the cost being continuous only in the action variable). 
%
\item[(ii)] Our primary contributions are with regard to partial and decentralized information. For partially observed models, while existence results had been established earlier, we establish near optimality of piece-wise constant control policies using the unified approach of the paper in Theorem \ref{existenceApprPartialInf}. 
\item[(iii)] For the case with decentralized information structures, we present new results for both existence and approximations under two general models in Theorems \ref{existenceApprDecenLocalMeas} and \ref{existenceApprCoupledDynLocalState}. Following this, we present discrete-time approximation results, which is new to the literature, to our knowledge.
\item[(iv)] As a second main set of results, we show that for stochastic control problems with full, partial or decentralized information, one can obtain discrete-time controlled Markov model whose solutions are near optimal for the continuous-time system. While corresponding results (under stronger conditions) for the fully observable models are available in the literature, the results for partial information and multi-agent setups are new in the context of their generality, and invite the applicability of relatively more mature discrete-time theory (in the context of analytical, approximation, and learning theoretic results) to continuous-time models.
\end{itemize}

\subsection*{Notation}
\begin{itemize}
\item $\sB_{r}$ denotes the open ball of radius $r$ in $\Rd$, centered at the origin.
\item By $\trace S$ we denote the trace of a square matrix $S$.
\item $m_{[0,t]} := \{m_s : 0\leq s \leq t\}$.
\item $\textbf{X} = (X^1, X^2,\dots , X^N)$, $\textbf{m} = (m_1, m_2,\dots , m_N)$. 
\item For any domain $\cD\subset\Rd$, the space $\cC^{k}(\cD)$ ($\cC^{\infty}(\cD)$), $k\ge 0$, denotes the class of all real-valued functions on $\cD$ whose partial derivatives up to and including order $k$ (of any order) exist and are continuous.
\item $\cC_{\mathrm{c}}^k(\cD)$ denotes the subset of $\cC^{k}(\cD)$, $0\le k\le \infty$, consisting of functions that have compact support. This denotes the space of test functions.
\item $\cC_{b}(\Rd)$ denotes the class of bounded continuous functions on $\Rd$\,.
\item $\cC^{k}_{0}(\cD)$, denotes the subspace of $\cC^{k}(\cD)$, $0\le k < \infty$, consisting of functions that vanish in $\cD^c$.
\end{itemize}
\section{Fully Observed Model}\label{sectionFullyObserved}
We start the discussion with the fully observed model. We note again that such a setup has already been studied extensively (see e.g. \cite{kushner2001numerical}, \cite{kushner1990numerical}). For fully observed models, while for the case without measure transformation we essentially follow the machinery present by Kushner and colleagues, we also consider and present an approach where measure transformation is adopted and a continuity result is presented. This leads to an alternative approach for both existence and approximations. A key continuity lemma, Lemma \ref{SomnathLemma} is presented toward this end. Our analysis will set the stage for the more general information structures to be studied later in the paper. Consider a continuous-time process $X_t$ taking values in a Euclidean space $\mathbb{R}^{d}$, controlled by a control process $U_t$ taking values in a compact metric space $\mathbb{U}$. In the context of a diffusion process, $X_t$ is a solution of the following stochastic differential equation (SDE) defined on a complete probability space $(\Omega, \sF, \Pm)$
\begin{align}\label{diffProcess}
dX_t = b(X_t,U_t)dt + \upsigma(X_t)dW_t
\end{align}
driven by standard Brownian motion $W_t$, under the control policy $U$ and the initial condition $x\in \Rd$. We also allow the control policy to be randomized, that is ${\cal P}(\mathbb{U})$-valued, where  ${\cal P}(\mathbb{U})$ denotes the space of probability measures on $\mathbb{U}$ under the topology of weak convergence. An \emph{admissible control} is a ${\cal P}(\mathbb{U})$-valued process $U_t$ which satisfies the following non-anticipativity condition: for $s<t\,,$ $W_t - W_s$ is independent of
$$\sF_s := \,\,\mbox{the completion of}\,\,\, \sigma(X_0, U_r, W_r : r\leq s)\,\,\,\mbox{relative to} \,\, (\sF, \mathbb{P})\,.$$ Let $\Uadm$ denote the space of all admissible controls. An admissible control is said to be Markov if $U_t = v(t, X_t)$ for some measurable map $v: \RR_+\times \RR^d \to {\cal P}(\mathbb{U})$\,. Let $\Um$ denote the space of all Markov controls\,.

To ensure existence and uniqueness of weak solutions of \cref{diffProcess}, we impose the following assumptions on the drift $b$ and the diffusion matrix $\upsigma$\,.
\begin{itemize}
\item[\hypertarget{A1}{{(A1)}}]
\emph{Continuity \& Boundedness condition:\/} The function $b$ is jointly continuous in $(x, \zeta)$ and $\upsigma$ is locally Lipschitz continuous, i.e., for some constant $C_{R}>0$ depending on $R>0$, we have
\begin{equation}\label{ELocalLipschA}
\norm{\upsigma(x_1) - \upsigma(x_2)}^2 \,\le\, C_{R}\,\abs{x_1 - x_2}^2
\end{equation}
for all $x_1,x_2\in \sB_R$ and $\zeta\in\Act$, where $\norm{\upsigma}\df\sqrt{\trace(\upsigma\upsigma\transp)}$\,. Also, we assume that $b$, $\upsigma$ are uniformly bounded, i.e., for some positive constant $\hat{C}_0$, we have
\begin{equation*}
\sup_{\zeta\in\Act}\, \abs{b(x, \zeta)} + \norm{\upsigma(x)} \,\leq\, \hat{C}_0 \qquad \forall\, x\in\RR^{d}\,.
\end{equation*}

\medskip
\item[\hypertarget{A2}{{(A2)}}]
\emph{Nondegeneracy:\/} For some $\hat{C}_1 >0$, it holds that
\begin{equation*}
\sum_{i,j=1}^{d} a^{ij}(x)z_{i}z_{j}
\,\ge\,\hat{C}_{1} \abs{z}^{2} \qquad\forall\, x\in \RR^{d}\,,
\end{equation*}
and for all $z=(z_{1},\dotsc,z_{d})\transp\in\RR^{d}$,
where $a\df \frac{1}{2}\upsigma \upsigma\transp$\,.
\end{itemize} In view of \hyperlink{A2}{{(A2)}}, it is easy to see that $\upsigma^{-1}$ exists and it is bounded\,. Using the Girsanov's change of measure technique as in \cite[Theorem~2.2.11]{ABG-book} we have that under the hypotheses \hyperlink{A1}{{(A1)}}--\hyperlink{A2}{{(A2)}}, for each $U\in \Uadm$ there exists a unique weak solution of \cref{diffProcess} (see also \cite[Section~4, pp. 190-197]{IkedaWatanabe89}, \cite{Krylov})\,.

We note that Assumptions \hyperlink{A1}{{(A1)}}--\hyperlink{A2}{{(A2)}} can be weakened in many situations. By following Girsanov's change of measure technique as in \cite[Theorem~2.2.11]{ABG-book}, under the assumption that the drift term $b$ is measurable and satisfies the linear growth condition, and $\upsigma$ is non-degenerate, locally Lipschitz continuous with linear growth condition, one can prove that (\ref{diffProcess}) admits unique weak solution for any $U\in \Uadm$\,.

In the following, first we will trace some of the ideas presented by Kushner (see e.g. \cite{kushner2012weak}), though with some presentational differences and then present an alternative approach.

Suppose that one wishes to minimize the cost
\begin{align}\label{cost1}
\cJ_{T}(x, U) = \Exp_x^{U}\bigg[\int_0^T c(X_s, U_s) ds  + c_T(X_T)\bigg],
\end{align}
over all admissible control policies, where $\Exp_x^{U}$ denotes the expectation with respect to the law of the process \cref{diffProcess} under the control policy $U$ with initial condition $x$\,. Here, the running cost function $c:\Rd\times \Act \to \RR$, and the terminal cost $c_T:\Rd \to \RR$ are assumed to be bounded and continuous functions.

We define a relaxed wide-sense admissible control policy in the following. We first place the Young topology which is a topology on stochastic kernels defining randomized/relaxed policies, where one views policies to be probability measures on a product space with a fixed marginal on input space (for more details, see\cite[p.254]{warga2014} and \cite[Definition 3.2]{yuksel2023borkar}). Let ${\cal P}_{\lambda}([0,T]\times\mathbb{U})$ be the space of positive finite measures on $[0,T] \times \mathbb{U}$ with its fixed marginal on $[0,T]$ to be the Lebesgue measure ${\lambda}$. Let
\begin{align}
\Urc :=& \{m: m \,\,\text{is a progressively measurable random process taking values in}\nonumber\\  &{\cal P}_{\lambda}([0,T]\times\mathbb{U}) \,\, \text{such that} \, m_{[0,s]}\,\,\text{is independent of}\, W_t - W_s\nonumber\\ 
&\text{for any} \, 0\leq s < t \leq T \}
\end{align} Thus $\Urc$ denotes the space of all relaxed wide-sense admissible control policy. Following the discussion as in \cite[p. 52]{KushnerSingular1990}, it follows that any $m\in \Urc$ can be represented as $m(dt,d\zeta) = m_t(d\zeta)\lambda(dt)$, where $m_t(d\zeta)$ is the random process such that $m_t(B)$ is adapted to $\sF_t$ for any $B\in \mathcal{B}(\Act)$\,. 

For above control problem (see, eq. \cref{cost1}), policy $m^*\in \Urc$ is said to be optimal if 
\begin{align*}
\cJ_{T}(x, m^*) = \inf_{m\in \Urc} \cJ_{T}(x, m)\,.
\end{align*}

We then consider the $C([0,T];\mathbb{R}^{d})$-valued (under sup-norm) $X_t$ process (solution to the diffusion equation (\ref{diffProcess})) induced by $m(dt,d\zeta)(\omega)$ and then consider the space of probability measures on these random variables. From \cite[Theorem~2.2.11]{ABG-book}, it is easy to see that under any choice of control process $m(dt,d\zeta)(\omega)$, (\ref{diffProcess})) admits a unique weak solution\,.

In order to establish existence of an optimal policy and suitable approximation of it, we adopt the following two approaches.

\subsection{Two Approaches: With or without Measure Transformation}

\subsubsection{Without measure transformation}\label{withoutMeasureTransformSection}

In one approach, presented extensively by several seminal studies by Kushner and collaborators \cite{kushner2012weak,kushner2001numerical,kushner1990numerical} as well as others such as Borkar \cite{borkar1989topology}, one considers the following.
Given the above, let us introduce a parametric family of elliptic operator, which will be useful in our analysis. With $\zeta\in\Act$ treated as a parameter, we define a family of operators ${\sL}_{\zeta}$ mapping $\cC^2(\Rd)$ to $\cC(\Rd)$ by
\begin{equation*}
{\sL}_{\zeta} f(x) \df\, \trace\bigl(a(x)\grad^2 f(x)\bigr) + \,b(x,\zeta)\cdot \grad f(x) \,.  
\end{equation*}

In the following theorem, we follow \cite{kushner2012weak,kushner2001numerical,kushner1990numerical} but under relatively weaker conditions, e.g., $b$ is jointly continuous and $\upsigma$ is locally Lipschitz continuous.

\begin{theorem}\label{contConvPathM}
Suppose that Assumptions \hyperlink{A1}{{(A1)}} and \hyperlink{A2}{{(A2)}} hold. If $m^n \to m$ weakly as $n\to \infty$, with $X^n$ is the solution of \cref{diffProcess} under the policy $m^n$, i.e., it satisfies
\begin{align*}\label{TContidiffPro1}
dX_t^n = b(X_t^n,m_t^n)dt + \upsigma(X_t^n)dW_t
\end{align*}
Then the solution process $X^n$ converges weakly to $X$ which is a unique solution of  
\begin{align*}\label{TContidiffPro2}
dX_t = b(X_t, m_t)dt + \upsigma(X_t)dW_t\,.
\end{align*}
\end{theorem}
\textbf{Proof.} We will follow Kushner's {\it weak convergence} approach (see e.g. \cite{kushner2001numerical}), with a slightly different presentation. 
Let $m^n \to m$ weakly as $n\to \infty$.  

Let $X^n$ be the unique weak solution of (\ref{diffProcess}) (see \cite[Theorem~2.2.11]{ABG-book}) corresponding to the control $m^n$. Suppose $p,q$ and $t_i\leq s$ for $i\leq p$ are arbitrary and $\Phi\in \cC_{c}\left((\Rd)^p\times (\RR)^{pq}\right)$, $\psi_j \in \cC\left([0, T]\times \Act\right)$ for $j\leq q$\,. Then by standard martingale characterization of the solution of the SDE (\ref{diffProcess}), by It\^{o} formula and standard monotone class argument as in \cite[p. 41]{ABG-book} (see also, \cite[Proposition~2.1, p. 169]{IkedaWatanabe89}), we get
\begin{align}\label{EcontConvPathM1A}
\Exp_{x}^{m^n} \bigg[ \Phi &\left(X_{t_i}^n, \int_{0}^{t_i}\int_{\Act}\psi_j(s, \zeta)m_s^n(d\zeta)\lambda({ds}), i\leq p, j\leq q\right)\bigg(f(X^n_t) - f(X^n_s)\nonumber\\
-& \int_s^t {\sL}_{\zeta} f(X^n_s) m^n_s(d \zeta) \lambda(ds)\bigg)\bigg]=0\,,
\end{align}
for each twice continuously differentiable function $f$ with compact support.

Now, for any $\delta \geq 0$ and any $0\leq t \leq s \leq t+\delta$, it follows that 
\begin{align*}
\Exp_{x}^{m^n}\big[|X_{s}^{n} - X_{t}^{n}|^2\big] \leq &  \Exp_{x}^{m^n}\big[|\int_{t}^{s} \left(\int_{\Act}b(X_r^n, \zeta)m_r^n(d\zeta) \lambda(d r) + \upsigma(X_r^n)dW_r\right)|^2\big]\nonumber \\
\leq & 2(\|b\|_{\infty}^2 \delta +  \|\upsigma\|_{\infty}^2)\delta\,.
\end{align*} Therefore, by a standard tightness criterion (see \cite[Theorem~7.2, p. 81]{PBill-book}, \cite[Theorem 1.4.2]{kushner2012weak}), we have set of laws of $X_{\cdot}^{n}$ is tight in $\mathcal{P}(C([0,T];\mathbb{R}^{d}))$\,. Let $X$ be the limit of a weakly convergent subsequence of $X^n$ (this exists by Prohorov's theorem \cite[Theorem~5.1, p. 59]{PBill-book}). By considering a subsequence where both $X^n, m^n$ converge weakly, by Skorohod's representation theorem (see \cite[Theorem~6.7]{PBill-book}), we have over a common probability space $X^n(\omega) \to X(\omega)$ as a $C([0,T];\mathbb{R}^{d})$-valued process and $m^n(\omega)(\cdot, \cdot) \to m(\omega)(\cdot, \cdot)$ weakly as a ${\cal P}_{\lambda}([0,T]\times\mathbb{U})$-valued process, almost surely (in $\omega$). Accordingly, by an application of the generalized dominated convergence \cite[Theorem 3.5]{serfozo1982convergence}, we have the integral
\begin{equation*}
\bigg( \int_{\Act}\,\left(b(X^n_s(\omega), \zeta)\cdot \grad f(X_s^n(\omega)) \right)m^n_s(\omega)(d \zeta) + \trace\bigl(a(X^n_s(\omega))\grad^2 f(X^n_s(\omega))\bigr)\bigg)
\end{equation*}
converge almost surely to
\begin{equation*}
\bigg( \int_{\Act}\,\left(b(X_s(\omega), \zeta)\cdot \grad f(X_s(\omega))\right)m_s(\omega)(d \zeta) + \trace\bigl(a(X_s(\omega))\grad^2 f(X_s(\omega))\bigr)\bigg)
\end{equation*}
Thus, taking limit $n\to \infty$ (along this subsequence), from (\ref{EcontConvPathM1A}) we deduce that the limiting process $X$ satisfies the following
\begin{align*}
\Exp_x^{m} \bigg[ \Phi &\left(X_{t_i}, \int_{0}^{t_i}\int_{\Act}\psi_j(s, \zeta)m_s(d\zeta)\lambda({ds}), i\leq p, j\leq q\right)\bigg(f(X_t) - f(X_s)\nonumber\\
-& \int_s^t {\sL}_{\zeta} f(X_s) m_s(d \zeta) \lambda(ds)\bigg]=0.
\end{align*}
Now, by the martingale characterization as in \cite[Theorem~2.2.9]{ABG-book}, we have $X$ is a unique weak solution of (\ref{diffProcess}) under the limiting policy $m$\,. We thus conclude that the path measure is weakly continuous in the control policy. 
\qed

\begin{remark}
In view of \cite[Theorem~2.2.2]{ABG-book}, it is easy to see that we can relax the uniform boundedness assumption in \hyperlink{A1}{{(A1)}}, which is used to prove the tightness of the sequence of solutions, to local Lipschitz continuity condition with linear growth (see \cite[Corollary~2.3.9]{ABG-book}).    
\end{remark}

Thus, the map from $m$ to the associated solution of the SDE \ref{diffProcess} is continuous under the topology defined on $\Urc$. The problem is to find an optimal $m\in \Urc$ which minimizes (\ref{cost1}). From this, one can, as in the deterministic case summarized in \cite[Section~7.1]{saldiyukselGeoInfoStructure}, arrive at general conditions for the existence of an optimal solution. We can rewrite the cost as
\begin{align}
\cJ_T(x, m) = \Exp_{x}^{m}\bigg[\int_0^T  \int_{\Act} c(X_s, \zeta)m_s(d\zeta)\lambda({ds}) + c_T(X_T)\bigg]
\end{align}
with $\cJ_T(x, \cdot): \Urc \to \mathbb{R}$\,. Next theorem proves that the total cost as a function of policy is continuous\,.
\begin{theorem}\label{contCostPathM}
Suppose that Assumptions \hyperlink{A1}{{(A1)}} and \hyperlink{A2}{{(A2)}} hold. Then, the map $$m\mapsto \cJ_T(x, m)$$ from $\Urc$ to $\mathbb{R}$ is continuous. 
\end{theorem}

\textbf{Proof.} 
The continuity of $\cJ_T$ in $m$ can be established again via the Skorohod representation argument presented in the proof of Theorem \ref{contConvPathM}. Let $m^n \to m$ weakly as $n\to \infty$. Once again, by a tightness argument, and by Theorem \ref{contConvPathM}, we have that as $m^n \to m$, then the associated solutions $X^n \to X$ (weakly) which is the solution of the SDE \cref{diffProcess} corresponding to the policy $m$. By considering a subsequence where both converge weakly, by Skorohod's representation theorem (see \cite[Theorem~6.7]{PBill-book}), we deduce that over a common  probability space $m^n(\omega) \to m(\omega)$ weakly as a ${\cal P}_{\lambda}([0,T]\times\mathbb{U})$-valued process and $X^n(\omega) \to X(\omega)$ as a $\cC([0, T]; \Rd)$-valued process, almost surely (in $\omega$). Then, by the generalized dominated convergence \cite[Theorem 3.5]{serfozo1982convergence}, we deduce that the expected cost
$$\Exp_{x}^{m^n}\bigg[ \int_0^T  \int_{\mathbb{U}} c(X^n_s,\zeta)m^n_s(d\zeta)\lambda(ds) + c_T(X_T^n)  \bigg]$$
converges to
$$\Exp_{x}^{m}\bigg[ \int_0^T  \int_{\mathbb{U}} c(X_s,\zeta)m_s(d\zeta)\lambda(ds) + c_T(X_T) \bigg].$$ \qed

Next, using the above continuity result we want to prove the near optimality of piece-wise constant policies\,. These policies break the time horizon into smaller intervals, during which the policy remains constant. This means that within each interval, the action taken or the decision made does not change, but it can switch to a different action or decision at the boundaries of these intervals. Piece-wise constant policies are useful because they simplify the decision-making process by reducing the number of times the policy needs to be updated. They play important role in scenarios where frequent policy changes are impractical or costly.

\begin{theorem}\label{existencePiece-wise}
Suppose that Assumptions \hyperlink{A1}{{(A1)}} and \hyperlink{A2}{{(A2)}} hold.  Then
\begin{itemize}
\item[(i)] There exists an optimal control policy in $\Urc$\,.
\item[(ii)] For every $\epsilon > 0$, there exists an ordinary piece-wise constant control policy in $\Urc$ (thus also non-anticipative) which is $\epsilon$-optimal. 
\end{itemize}
\end{theorem}
\begin{proof}
Note that the space $\Urc$ is compact, thus existence follows from Theorem \ref{contCostPathM}. 

From \cite[Theorem 2.3.1]{ABG-book}, we know that the set of ordinary non-anticipative measures with quantized support (where both time and control variables are discretized into finite sets of values) is dense in ${\cal P}_{\lambda}([0,T]\times\mathbb{U})$. Thus, by the continuity of the cost as a function of policies (as we have established in Theorem~\ref{contCostPathM}), we obtain our result\,.  
\end{proof}
\begin{remark}
Since the map $m$ to the corresponding solution $X$ of the stochastic differential equation (SDE) is continuous (see Theorem~\ref{contConvPathM}). To demonstrate the existence and approximation result as outlined in Theorem~\ref{existencePiece-wise}, it suffices to assume that both the running cost $c$ and the terminal cost $c_T$ are bounded and lower semicontinuous. Given this assumption of lower semicontinuity, we can apply Fatou's lemma to show that the map $m \to \cJ_T(x, m)$ is lower semicontinuous. Consequently, the result is derived from the compactness of the policy space $\Urc$. \end{remark}

\subsubsection{With measure transformation}\label{withMeasureTransformSection}

In an alternative approach, we consider the state process to be exogenous and the control only impacting the cost function. For the analysis of this subsection, we assume the following regularity conditions for the cost functions:  
\begin{itemize}
\item we assume that the running cost $c(x, \zeta)$ is bounded measurable and continuous in its second argument (i.e., only in $\zeta$) and $c_T$ is bounded measurable.
\end{itemize}

To study this, we adopt Girsanov's measure transformation, building on \cite{benevs1971existence}. This enables us to transform the probability measure $\Pm$ to an equivalent probability measure $\Pm_0$, under which the SDE becomes
\[dX'_t = \upsigma(X'_t)dW_t,\]
which in fact is policy independent. 

Somewhat differently from the previous subsection, we define relaxed wide-sense admissible control policy in the following. As in the above, we first place the Young topology on the control action space, by viewing the joint distribution of $X^{'}$ and the control policy $m$ to be a probability measure on $C([0,T]; \mathbb{R}^{d}) \times {\cal P}_{\lambda}([0,T]\times\mathbb{U})$ with its fixed marginal on $C([0,T]; \mathbb{R}^{d})$ to be the Wiener measure, moreover we require that, under the measure-transformed model, $m_{[0,s]}$ be independent of $W_t - W_s$ for any $t > s$\,. Let $\Uwrc$ denotes the space of all wide-sense admissible control policies\,. A typical element of $\Uwrc$ is denoted by $m$ (without loss of generality)\,. 

For any $t\geq 0$, let
\begin{align*}
Z_t = \exp \biggl[ \int_0^t \upsigma^{-1}(X_s)b(X_s,m_s)dW_s - \frac{1}{2} \int_0^t |\upsigma^{-1}(X_s)b(X_s,m_s)|^2 ds \biggr]\,. 
\end{align*} Since $b$ is bounded and $\upsigma^{-1}(x)$ exists and is bounded (which is a consequence of \hyperlink{A2}{{(A2)}}), by Novikov's criterion (see \cite[Theorem~5.3  p. 152]{IkedaWatanabe89}), we have $\{Z_t\}$ is a nonnegative $\sF_t$ martingale with mean one and therefore a legal family of Radon-Nikodym derivatives\,. By Girsanov's theorem, we define the probability measure $\Pm_0$ as follows: if $\Pm_t$ (resp. $\Pm_{0t}$) denotes the restriction of $\Pm$ (resp. $\Pm_{0}$ ) to $\sF_t$ then $\Pm_t \ll\Pm_{0t}$ with (for details see \cite{benevs1971existence})
\begin{align*}
\frac{d\Pm_t}{d\Pm_{0t}} := Z_t\,. 
\end{align*}

In this case, the marginal on the state process is fixed, but the cost function is now represented as:
\[ \cJ_T(x, m) = \Exp_{x,\Pm_0}^{m}\bigg[\frac{d\Pm_T}{d\Pm_{0T}} \bigg(\int_0^T c(X_s, m_s) ds + c_T(X_T) \bigg) \bigg],\] where $\Exp_{x,\Pm_0}^{m}$ is the expectation with respect to the probability measure $\Pm_0$\,. 

In view of the above discussions it follows that in the transformed model (i.e., under $\Pm_0$), the solution of the SDE does not depend on the choice of control policy\,. Thus, one wishes to minimize
\begin{align}\label{OPTR11}
\inf_{m \in \Uwrc} \Exp_{x,\Pm_0}^{m}\bigg[ \frac{d\Pm_{T}}{d\Pm_{0T}} \bigg( \int_0^T  \int_{\mathbb{U}}  c(X_s,\zeta)m_s(d\zeta)\lambda(ds)  + c_T(X_T)\bigg)\bigg]\,.
\end{align}
\begin{remark}[Comparison between the approaches: With measure transformation vs. without measure transformation]
Note that in the measure transformation case the measure on the path space of the solution of the SDE is control-free and thus fixed, so the set of measures in ${\cal P}(C([0,T];\RR^d))$ characterizing law of the solution of the SDE is a singleton. There is no need for establishing tightness towards showing Theorem \ref{contConvPathM}, unlike the approach without measure transformation presented above. In the latter approach, this is not needed and the analysis is purely probabilistic, even though a new lemma on the continuity of the reduced cost under the transformed strategic measure, Lemma \ref{SomnathLemma}, is needed. This approach brings about versatility for studying more general information structures, as we will see in the later sections. However, for the measure transformation, $\upsigma(\cdot)$ is not allowed to depend on control. As we show later in this paper, the Girsanov's change of measure technique allows us to tackle stochastic control problems with a variety of information structures, e.g., fully observable, partially observable, and decentralized information. It is important to note that the former approach (without measure transformation) could be extended to the degenerate case, control dependent $\upsigma$ and infinite dimensional problems, etc. Note that in the former approach, one has to show that the set on the path space changes continuously with the control convergence (that is, law of the solution depends on $m$). This requires a martingale characterization of the solution process and a tedious stochastic analysis.  
\end{remark}

In the following, we adopt the latter approach, as it will be much simpler to be generalized to information structures beyond the fully observed model, including decentralized information structures. This analysis is based on the supporting result in Lemma \ref{SomnathLemma}, which shows that the Radon-Nikodym derivative is continuous as a function of policies (under a suitable topology over the policy space)\,. Similar continuity result proved in \cite[Lemma~1]{charalambous2016decentralized}, using $L^2$ convergence of the policies, under the assumption that the information structures are not corrupted by any external noise processes\,. In \cite{CharalambousA13}, the authors established existence of a team optimal policy by applying Girsanov's change of measure argument\,. In order to do so, as an intermediate step they prove that the Radon-Nikodym derivative is continuous as a function of policy under the weak* topology (see \cite[Lemma~1]{CharalambousA13})\,.

\begin{lemma}\label{compactnessLemma11}
The space $\Uwrc$ under the weak convergence topology is compact.
\end{lemma}
\textbf{Proof.}  Note that for $m\in\Uwrc$ the joint distribution of $(X^{'}, m)$ is an element of ${\cal P} \left( C([0,T]; \mathbb{R}^{d}) \times {\cal P}_{\lambda}([0,T]\times\mathbb{U})\right)$. Since under any $m\in \Uwrc$ the marginal of the joint distribution of $(X^{'}, m)$ on $C([0,T]; \mathbb{R}^{d})$ is fixed and ${\cal P}_{\lambda}([0,T]\times\mathbb{U})$ is compact (via Prohorov's theorem), it follows that the law of the elements of $\Uwrc$ is tight. Thus relatively compact by Prohorov's theorem. Let $m^n\in \Uwrc$ be such that $m^n\to m$. Since the marginal of the joint distribution of $(X^{n'}, m^n)$ on $C([0,T]; \mathbb{R}^{d})$ is Wiener measure for all $n\in \NN$, it follows that the the marginal of the joint distribution of the limit $(X^{'}, m)$ on $C([0,T]; \mathbb{R}^{d})$ is also Wiener measure\,. As in the the proof of \cite[Lemma 2.3]{FlPa82}, since independence is preserved under weak convergence of probability measures (see also \cite[Theorem 5.6]{YukselWitsenStandardArXiv}), thus $\Uwrc$ is also closed, hence compact\,.
\qed

Note that for any function $f:\Act \to \Rd$ and $m\in \mathcal{P}(\Act)$, the function $f(m) = \int_{\Act} f(\zeta) d\zeta$\,. The following lemma shows that the Radon-Nikodym derivative is continuous as a function of policies over $\Uwrc$ (under the topology of weak convergence)\,.
\begin{lemma}\label{SomnathLemma}
Suppose that Assumptions \hyperlink{A1}{{(A1)}} and \hyperlink{A2}{{(A2)}} hold. Then, on $\Uwrc$, the map
\begin{eqnarray*}
m\mapsto \exp \biggl[ \int_0^T \upsigma^{-1}(X_s)b(X_s,m_s)dW_s - \frac{1}{2} \int_0^T |\upsigma^{-1}(X_s)b(X_s,m_s)|^2 ds \biggr]
\end{eqnarray*}
is continuous in $L^1$ norm.
\end{lemma}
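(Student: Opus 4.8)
The plan is to prove sequential continuity. Fix a sequence of policies with $\zeta^n\to\zeta$ in ${\cal P}({\cal P}_\lambda([0,T]\times\mathbb{U}))$, with relaxed controls $m^n\sim\zeta^n$ and $m\sim\zeta$, and write $f(s,u):=\sigma^{-1}(X_s)b(X_s,u)$ and $\bar f_s:=\int_{\mathbb{U}} f(s,u)\,m_s(du)$, so that the density reads $Z_T(m)=\exp\big(\int_0^T \bar f_s\,dB_s-\tfrac12\int_0^T\!\int_{\mathbb{U}}|f(s,u)|^2\,m_s(du)\,ds\big)$, which coincides with the expression in the statement when $m_s=\delta_{U_s}$. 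The target is $E^{P_0}|Z_T(m^n)-Z_T(m)|\to0$. The decisive simplification relative to Section~\ref{withoutMeasureTransformSection} is that under $P_0$ the path solves the control-free equation $dX_t=\sigma(X_t)\,dB_t$ and is common to every policy, so no tightness or path-convergence argument is needed and the only moving object is $m^n$; by \hyperlink{A2}{{$\hat{(A2)}$}}--\hyperlink{A3}{{$\hat{(A3)}$}} the integrand $f$ is bounded by a constant $K$ and, for a.e.\ fixed path, jointly continuous in $(s,u)$. I would split the proof into (i) convergence of $Z_T(m^n)$ to $Z_T(m)$ in probability, from convergence of the exponent, and (ii) an upgrade to $L^1$ by uniform integrability. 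For (i), first invoke Skorohod's representation theorem, exactly as in Theorem~\ref{contConvPathM}, to realize the controls on a common space with $m^n(\omega)\to m(\omega)$ weakly in ${\cal P}_\lambda([0,T]\times\mathbb{U})$ almost surely, carrying along the fixed pair $(B,X)$.

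The Lebesgue term in the exponent is the routine part. For a.e.\ $\omega$ the map $(s,u)\mapsto|f(s,u)|^2$ is bounded and continuous and $m^n(\omega)\to m(\omega)$ weakly on $[0,T]\times\mathbb{U}$; hence $\int_0^T\!\int_{\mathbb{U}}|f(s,u)|^2 m^n_s(du)\,ds\to\int_0^T\!\int_{\mathbb{U}}|f(s,u)|^2 m_s(du)\,ds$ almost surely, the $\omega$-dependence of $f$ being absorbed by the generalized dominated convergence theorem \cite[Theorem~3.5]{serfozo1982convergence}, precisely as the drift was handled in Theorem~\ref{contConvPathM}.

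The main obstacle is the stochastic integral $\int_0^T\bar f^n_s\,dB_s$. The natural route is It\^o's isometry, which reduces matters to the \emph{strong} $L^2([0,T])$ convergence $E\int_0^T|\bar f^n_s-\bar f_s|^2\,ds\to0$. This is the delicate point, and it is where the structure of the problem must enter: weak convergence of $m^n$ yields only \emph{weak} $L^2([0,T])$ convergence of the averaged drifts $\bar f^n$, and fast chattering of $m^n$ can keep $\int_0^T\bar f^n_s\,dB_s$ from converging, so strong convergence does not come for free from the Young topology alone. I would therefore establish the required convergence by exploiting the finer structure available on $\Gamma_{RC}$: approximate along the dense class of time/action-quantized piecewise-constant controls furnished by \cite[Theorem~2.3.1]{ABG-book}, on which the drifts converge strongly and the stochastic integrals converge in $L^2$, and then pass to the limit. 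This step, rather than the measure-theoretic bookkeeping, is where the real work lies.

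Finally I would upgrade convergence in probability to $L^1$ convergence by uniform integrability. A standard exponential-martingale moment bound for $Z_T(m^n)$ (using $|\bar f^n|\le K$, hence $\int_0^T|\bar f^n_s|^2\,ds\le K^2T$, together with Novikov's condition) gives $\sup_n E^{P_0}[Z_T(m^n)^p]<\infty$ for every $p>1$, so $\{Z_T(m^n)\}$ is bounded in $L^p$ and hence uniformly integrable; combined with the convergence in probability from (i), Vitali's convergence theorem yields $Z_T(m^n)\to Z_T(m)$ in $L^1$. When the limiting policy is ordinary, so that $E^{P_0}[Z_T(m)]=1$, Scheff\'e's lemma gives the same conclusion directly. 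Continuity of the map on $\Gamma_{RC}$ then follows.
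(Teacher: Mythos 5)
Your overall architecture matches the paper's: both reduce the $L^1$ difference of the exponentials to (a) an $L^2$ estimate on the difference of the exponents, dominated by the stochastic-integral term, and (b) a uniform second-moment (in your case, $L^p$-moment) bound on the exponential martingales to absorb the max/upgrade to $L^1$ --- the paper does (a)--(b) via the elementary bound $e^b-e^a\le |b-a|\max(e^b,e^a)$ plus Cauchy--Schwarz, you do it via Vitali, and both moment bounds are the same Novikov-type computation. Your treatment of the Lebesgue term and the uniform integrability step are fine. The one place where your proof is not complete is exactly the place you flag yourself: the claim that $E\int_0^T|\bar f^n_s-\bar f_s|^2\,ds\to 0$. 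Your diagnosis is correct --- Young-topology convergence of $m^n$ gives only weak $L^2$ convergence of the averaged drifts $\bar f^n_s=\int_{\Act}\sigma^{-1}(X_s)b(X_s,u)\,m^n_s(du)$, and a chattering sequence (e.g.\ $m^n_s$ alternating between two Dirac masses on intervals of length $1/n$, converging to their mixture) keeps $\int_0^T|\bar f^n_s-\bar f_s|^2ds$ bounded away from zero, so the stochastic integrals need not converge in $L^2$.

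The resolution you propose, however, does not close this gap. Appealing to the density of time/action-quantized controls (\cite[Theorem 2.3.1]{ABG-book}) cannot establish continuity of $m\mapsto Z_T(m)$ at an arbitrary limit point: to replace a given convergent sequence $m^n\to m$ by nearby quantized controls $\tilde m^n$ you would need to control $\|Z_T(m^n)-Z_T(\tilde m^n)\|_{L^1}$ in terms of the Young-topology distance, which is precisely the continuity statement being proved --- the argument is circular. Density arguments are the right tool for the \emph{downstream} conclusion (near-optimality of piece-wise constant policies, Theorem~\ref{existencewithMeasTransformation}(ii)), but they presuppose rather than supply continuity of the cost. For comparison, the paper's own Step 3 disposes of this term by invoking the It\^o isometry together with ``the proof steps presented in the proof of Theorem~\ref{contConvPathM}''; that theorem, though, controls only time-integrated linear functionals of $m^n$ (the martingale-problem form $\int_0^t\int_{\Act}b\cdot\grad f\,m^n_s(du)\,ds$), not the strong $L^2([0,T])$ convergence of $\bar f^n$ that the isometry requires. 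So you have correctly located the load-bearing step, but neither your density argument nor a bare citation of It\^o's isometry proves it; a complete proof must either supply a genuine argument for the $L^2$ convergence of $\int_0^T\bar f^n_s\,dB_s$ on the class of sequences actually used, or restrict/re-route the continuity claim (e.g.\ establish continuity of the tilted cost $J$ directly through weak convergence of the tilted path--control laws rather than through $L^1$ convergence of the densities themselves).
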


\textbf{Proof.} See the Appendix.\qed

Since the running $c$ and the terminal cost $c_T$ are bounded and continuous, using this continuity property of the Radon-Nikodym derivative as a function of policy, we arrive at the following continuity result.

\begin{lemma}\label{contLemma11}
Suppose that Assumptions \hyperlink{A1}{{(A1)}} and \hyperlink{A2}{{(A2)}} hold. Then, the map
$$m\mapsto \cJ_T(x,m)$$ is continuous on $m \in \Uwrc$ under the weak convergence topology.
\end{lemma}

\textbf{Proof.} We apply Lemma \ref{SomnathLemma}. As $m^n \to m$ weakly, we have a probability space in which $m^n(\omega) \to m(\omega)$ almost surely. Let $$Z_T^n = \exp \biggl[ \int_0^T \upsigma^{-1}(X_s)b(X_s,m_s^n)dW_s - \frac{1}{2} \int_0^T |\upsigma^{-1}(X_s)b(X_s,m_s^n)|^2 ds\bigg]$$ and $$Z_T = \exp \biggl[ \int_0^T \upsigma^{-1}(X_s)b(X_s,m_s)dW_s - \frac{1}{2} \int_0^T |\upsigma^{-1}(X_s)b(X_s,m_s)|^2 ds\bigg]\,.$$ Now by the triangle inequality, we have 
\begin{align*}
&|\Exp \bigg[ Z_T^n \big[\int_0^T\int_{\Act} c(X_s, \zeta)m^n(d s, d \zeta) + c_T(X_T)\big]\bigg]\nonumber\\
&\,\, -  \Exp \bigg[Z_T \big[\int_0^T\int_{\Act} c(X_s, \zeta)m(d s, d \zeta) + c_T(X_T)\big]\bigg]|\nonumber\\
\leq & \Exp \bigg[\bigg | Z_T^n \int_0^T\int_{\Act} c(X_s, \zeta)m^n(d s, d \zeta) -  Z_T \int_0^T\int_{\Act} c(X_s, \zeta)m^n(d s, d \zeta)\bigg| \bigg]\nonumber\\
& \,\,+ \Exp \bigg[\bigg| Z_T \int_0^T\int_{\Act} c(X_s, \zeta)m^n(d s, d \zeta) -  Z_T \int_0^T\int_{\Act} c(X_s, \zeta)m(d s, d \zeta)\bigg|\bigg] \nonumber\\
&\,\, + \Exp\bigg[\big|Z_T^nc_T(X_T)-  Z_Tc_T(X_T)\big|\bigg]\,.
\end{align*}
Since $c, c_T$ are bounded, by Lemma~\ref{SomnathLemma}, we have that the first and third terms of the above inequality go to zero. Furthermore, since $Z_T$ is bounded and $c(x, \cdot)$ is bounded and continuous, it is easy to see that the second term of the above inequality also goes to zero as $n\to \infty$\,. This completes the proof\,.  \qed

The following theorem proves the existence of an optimal policy in $\Uwrc$. Also, it shows that the piece-wise constant policies in $\Uwrc$ are near optimal\,.

\begin{theorem}\label{existencewithMeasTransformation}
Suppose that Assumptions \hyperlink{A1}{{(A1)}} and \hyperlink{A2}{{(A2)}} hold. Then, we have
\begin{itemize}
\item[(i)] There exists an optimal control policy in $\Uwrc$.
\item[(ii)] For every $\epsilon > 0$, there exists an ordinary piece-wise constant control policy in $\Uwrc$ (thus also non-anticipative) which is $\epsilon$-optimal. 
\end{itemize}
\end{theorem}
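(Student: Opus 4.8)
The plan is to derive both claims as soft consequences of the two structural facts already assembled in this subsection: the compactness of the admissible strategic measures (Lemma \ref{compactnessLemma11}) and the continuity of the reduced cost functional $J$ in the weak convergence topology (Lemma \ref{contLemma11}), the latter itself resting on the $L^1$-continuity of the Radon--Nikodym derivative established in Lemma \ref{SomnathLemma}.

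For part (i), I would first identify $\Gamma_{RC}$ with the set of those $\zeta \in {\cal P}({\cal P}_{\lambda}([0,T]\times\mathbb{U}))$ whose control processes $m$ satisfy the non-anticipativity constraint that $m_{[0,t]}$ is independent of $B_s - B_t$ for $s>t$. By Lemma \ref{compactnessLemma11}(i) the ambient space ${\cal P}({\cal P}_{\lambda}([0,T]\times\mathbb{U}))$ is compact, and by Lemma \ref{compactnessLemma11}(ii) the constraint set is closed; hence $\Gamma_{RC}$ is a closed subset of a compact space, and is therefore itself compact. It is also nonempty, since constant controls trivially satisfy the independence constraint. Because Lemma \ref{contLemma11} gives that $J$ is continuous on this space, the Weierstrass extreme value theorem applies: a continuous real-valued function on a nonempty compact set attains its infimum, yielding an optimal policy $\zeta^{*}\in\Gamma_{RC}$.

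For part (ii), the plan is to mirror the density argument of Theorem \ref{existencePiece-wise}, but invoking the continuity of $J$ from Lemma \ref{contLemma11} (which encodes the measure-transformed cost) in place of Theorem \ref{contCostPathM}. By \cite[Theorem 2.3.1]{ABG-book}, the non-anticipative controls with quantized support in both time and action---precisely the piece-wise constant policies---are dense in ${\cal P}_{\lambda}([0,T]\times\mathbb{U})$, and freezing the control on each interval $[kh,(k+1)h)$ according to information available at $kh$ keeps them inside $\Gamma_{RC}$. Given $\epsilon>0$, I would take a sequence of such piece-wise constant policies converging weakly to (an approximant of) the optimal $\zeta^{*}$ and use continuity of $J$ to make the value gap smaller than $\epsilon$ for $n$ large, producing the desired $\epsilon$-optimal piece-wise constant policy.

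The main obstacle is not the attainment or approximation step, each of which is a routine compactness-plus-continuity argument, but rather confirming that the quantized approximants in part (ii) genuinely remain in the non-anticipative class $\Gamma_{RC}$, so that the dense family used is itself admissible, and that the weak-convergence topology on these quantized policies is compatible with the convergence hypothesis of Lemma \ref{contLemma11}. The substantive analytic content sits upstream in Lemma \ref{SomnathLemma}; once its $L^1$-continuity is granted, both parts of the theorem follow essentially formally.
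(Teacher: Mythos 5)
Your proposal is correct and follows essentially the same route as the paper: part (i) is obtained from the compactness of $\Gamma_{RC}$ (Lemma \ref{compactnessLemma11}) together with the continuity of $J$ (Lemma \ref{contLemma11}, resting on Lemma \ref{SomnathLemma}) via a Weierstrass argument, and part (ii) from the denseness of non-anticipative quantized-support measures in ${\cal P}_{\lambda}([0,T]\times\mathbb{U})$ per \cite[Theorem 2.3.1]{ABG-book} combined with the same continuity. No substantive differences from the paper's argument.
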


\textbf{Proof.} (i) Existence follows from Lemma \ref{contLemma11} and Lemma \ref{compactnessLemma11}: The value is continuous in this joint measure on $\{(X_s, m_s), s \in [0,T]\}$ and this set of measures is tight, where the control policy space $\Uwrc$ is compact under weak convergence topology (see e.g. the proof of Lemma~\ref{compactnessLemma11}). These lead to the compactness-continuity conditions and accordingly an existence result for optimal policies follows. 

(ii)
Applying chattering lemma \cite[Theorem 2.3.1]{ABG-book} \footnote{See \cite[Section~6]{FlPa82} for a similar construction of piece-wise constant control policy. See also \cite[Theorem 3]{milgrom1985distributional}, \cite{balder1997consequences},\cite[Proposition 2.2]{beiglbock2018denseness},
\cite{lacker2018probabilistic}, \cite{borkar1988probabilistic}, \cite[Chapter 7]{castaing2004young} or \cite[Theorem 5.1]{yuksel2023borkar} for related arguments though without the non-anticipativity condition on the approximating policies with discrete-support; the problem of finding such an approximate sequence is crucial for discrete-time optimization, which will be studied further in the paper.}, we deduce that ordinary policies are dense in relaxed ones. In-particular we have that the set of non-anticipative measures ${\cal P}_{\lambda}([0,T]\times\mathbb{U})$ (with fixed marginal on $C([0, T]\times \mathbb{U})$) which have quantized support (in both time and control) is dense in $\Uwrc$ (for more details see \cite[Section~4]{FlPa82}). Now, considering the joint law of the sate and control, we have the continuity of the expected total cost as a function of control policy (Lemma~\ref{contLemma11}). Thus by the continuity and denseness (of the ordinary piece-wise constant control policies), we get the near optimality of the piece-wise constant control policies. \qed

Later in the paper, the result above will allow us to approximate a continuous-time process with a (sampled) discrete-time process and the machinery developed for discrete-time optimal control will be applicable.

\section{Partially Observed Setup}\label{partiallyObsSec}

Consider now a partially observed continuous-time process $\{X_t\}$ on $\mathbb{R}^{d}$, controlled by a control process $\{U_t\}$ taking values in a compact action space $\mathbb{U}$, and with an associated observation process $\{Y_t\}$ taking values in $\mathbb{R}^M$, where $0 \leq t \leq T$. The evolution of $\{X_t,Y_t\}$ is given by the stochastic differential equations
\begin{align}\label{EPartialStat1}
dX_t &= b(X_t,U_t) dt + \upsigma(X_t) dW_t, \nonumber\\
dY_t &= g(X_t) dt + dB_t\,,
\end{align}
where, $g:\RR^{d}\to \RR^{M}$ is a continuous and bounded function and $W$ and $B$ are independent standard Wiener processes with values in $\mathbb{R}^d$ and $\mathbb{R}^M$, respectively (hence, $\upsigma$ is a  $d \times d$-matrix). The objective is to minimize the following cost function
\begin{align}\label{criterionPOMDPCT}
\Exp_{x}^{U} \bigg[ \int_0^T c(X_t,U_t) dt + c_T(X_T) \bigg], 
\end{align}
where the running cost $c: \mathbb{R}^d \times \mathbb{U} \rightarrow [0,\infty)$ and the terminal cost $c_T:\mathbb{R}^d \rightarrow [0,\infty)$ are given functions. Throughout this section we are assuming that the running cost $c$ and the terminal cost $c_T$ are bounded and continuous functions.

We require that the control process $\{U_t\}$ be adapted to the filtration generated by the observation process $\{Y_t\}$; that is, for each $t \in [0,T]$, $U_t$ is $\sigma\left(Y_s, 0 \leq s \leq t \right)$-measurable. We will call such policies (\emph{strict-sense or precise}) admissible policies. In \cite{FlPa82}, Fleming and Pardoux introduced another class of policies which they named to be \textit{wide-sense admissible policies}. Using this relaxed class of policies, they studied the existence of optimal policies to the above problem. 

The idea is again to first apply Girsanov's transformation so that the measurements $Y_t$ form an independent Wiener process under new probability measure $Q$\,. Following Fleming and Pardoux \cite[p. 264]{FlPa82}, we define an admissible control as a probability measure on $C([0,T]\times \mathbb{R}^{M})\times {\cal P}_{\lambda} ([0,T] \times \mathbb{U})$ with its fixed marginal on $C([0,T]\times \mathbb{R}^{M})$ be the Wiener measure. In addition, under the new measure $Q$, $Y_r - Y_t$ is independent of $\{X_0, W_{\cdot}, Y_s, m_s; s\leq t\}$, for any $0\leq t \leq r \leq T$\,. The latter condition states that actions up to time $t$ are independent of the observations after time $t$ given past observations and actions. In other words, instead of saying that actions should be dependent on current and past observations, this condition states that actions should be independent of future observations given past observations and actions.  Let $\Uws$ denote the space of such policies, where we endow this space with the weak convergence topology. As in Lemma~\ref{compactnessLemma11}, we have $\Uws$ is compact under weak convergence topology\,. Without loss of generality, a typical element of $\Uws$ is denoted by $m$.

Let 
\[G_t:= e^{\int_0^t g(X_s)dY_s - \frac{1}{2}\int_0^t |g(X_s)|^2 ds}\]
with $\int_0^t g(X_s)dY_s$ being a stochastic integral with respect to the random process $Y_s$. Since $g$ is bounded, by Novikov's criterion (see \cite[Theorem~5.3  p. 152]{IkedaWatanabe89}), we have $\{G_t\}$ is a nonnegative martingale with mean one. Therefore it is legal family of Radon-Nikodym derivatives\,. Now by Girsanov's theorem, we define the probability measure $Q$ as follows: (for details see \cite{FlPa82})
\begin{align*}
\frac{d\Pm}{dQ} := G_T\,, 
\end{align*} under this new measure $Q$ we get an equivalent model
\begin{align}\label{EPartialStat2}
dX_t &= b(X_t,U_t) dt + \upsigma(X_t) dW_t, \nonumber\\
dY_t &= dB_t\,.
\end{align}

This relation allows us to view the partially observed problem as one with independent measurements, with the dependence pushed to the Radon-Nikodym derivative. 

\begin{theorem}\label{existenceApprPartialInf}
Suppose that Assumptions \hyperlink{A1}{{(A1)}} and \hyperlink{A2}{{(A2)}} hold\,. Then,
\begin{itemize}
\item[(i)] The total cost
\begin{align}
\cJ_T(x, m) = \Exp_{x,Q}^{m} \bigg[ \frac{d{\Pm}}{d{Q}}\left(\int_0^T c(X_t,m_t) dt + c_T(X_T)\right) \bigg], 
\end{align}
as a function of policies is continuous over the space of wide-sense admissible policies $\Uws$.
\item[(ii)] There exists an optimal control policy in $\Uws$.
\item[(iii)] For every $\epsilon > 0$, there exists an ordinary piece-wise constant control policy in $\Uws$ which is $\epsilon$-optimal. 
\end{itemize}
\end{theorem}

\textbf{Proof.} (i)-(ii) Once we have a measure transformation, continuity and existence follow by the similar argument as in Lemma \ref{contLemma11}, Lemma \ref{compactnessLemma11} and Theorem~\ref{contConvPathM} (since the signal process $X_{\cdot}$ is not fixed in the current setup): For any sequence $m^n \to m$ in $\Uws$, by the similar argument as in the proof of Theorem~\ref{contConvPathM}, we have that the solution $X^n$ of (\ref{EPartialStat2}) associated to $m^n$ converges (weakly) to the solution $X$ (of (\ref{EPartialStat2})) associated to $m$\,. Also, since $g$ is bounded and continuous, arguing as in the proof of Lemma~\ref{SomnathLemma}, it follows that $\frac{d{P}}{d{Q}}(X_{[0,t]}^n,Y_{[0,t]})$ converges to $\frac{d{P}}{d{Q}}(X_{[0,t]},Y_{[0,t]})$\,. Thus, following the steps as in the proof of Lemma~\ref{contLemma11}, we obtain the continuity of the value\,. Also, the set of admissible control measures is weakly closed as the independence of the variables $Y,U$ is preserved under weak convergence (see the proof of \cite[Lemma 2.3]{FlPa82}). These lead to the compactness-continuity conditions and accordingly an existence result for optimal policies follows. 

(iii) 
As in \cite[Section~4, p. 274]{FlPa82}, we have the set of non-anticipative measures ${\cal P}_{\lambda}([0,T]\times\mathbb{U})$ with fixed marginal on $C([0, T]; \mathbb{R}^{M})$ which have quantized support, in both time and control, is dense (see also Theorem \ref{existencewithMeasTransformation})\,. Thus, in view of the above continuity result it follows that piece-wise constant control policies which are furthermore non-anticipative are near optimal.\qed

Accordingly, we can again establish both existence and discrete approximation results. Once again, the above will allow us to approximate a continuous-time process with a (sampled) discrete-time process and the machinery developed for discrete-time optimal control will be applicable.

\section{Decentralized Information}\label{DecSecContOpt}

\subsection{Decentralized Model with Local Measurements}\label{CoupledLocalMeas}

Consider a continuous-time process $\{X_t\}$ on a Euclidean space $\mathbb{R}^{d}$, controlled by a collection of control process ${\bf U}_t:= (U^k_t, i = 1, \cdots, N)$ with each $U^i_t$ taking values in a compact Borel action space $\mathbb{U}^i$, and with an associated observation process $\{Y^i_t\}$ taking values in $\mathbb{R}^M$, where $0 \leq t \leq T$. Let ${\bf Y}=(Y^1,\cdots,Y^N)$. The evolution of $\{X_t,Y^i_t, i=1,\cdots,N\}$ is given by the stochastic differential equations
\begin{align}
dX_t &= b(X_t,U^1_t,\cdots,U^N_t) dt + \upsigma(X_t) dW_t, \nonumber \\
dY^i_t &= g^i(X_t) dt + dB^i_t, \qquad i=1,\cdots, N.\label{sdeDec1}
\end{align}
Where, $W$ and $B^i, i=1,\cdots, N$ are independent standard Wiener processes with values in $\mathbb{R}^d$ and $\mathbb{R}^M$, respectively and $g^{i}:\RR^{d}\to \RR^{M}$, $i=1,\dots ,  N$ are given continuous and bounded functions\,.

In this section, we assume that the drift term $b$ and the diffusion matrix $\upsigma$ satisfies similar conditions as in \hyperlink{A1}{{(A1)}} and \hyperlink{A2}{{(A2)}}. In particular, they satisfy the following:
\begin{itemize}
\item[\hypertarget{D1}{{(D1)}}]
\emph{Local Lipschitz continuity:\/}
The function $b\colon\RR^d\times \prod_{k=1}^{N}\Act^{k}\to\RR^d$ is jointly continuous in $(x, {\bf \zeta})$ and $\upsigma\,=\,\bigl[\upsigma^{ij}\bigr]\colon\RR^{d}\to\RR^{d\times d}$ is locally Lipschitz continuous in $x$. In particular, for some constant $C_{R}>0$
depending on $R>0$, we have
\begin{equation*}
\norm{\upsigma(x_1) - \upsigma(x_2)}^2 \,\le\, C_{R}\,\abs{x_1 - x_2}^2
\end{equation*}
for all $x_1,x_2\in \sB_R$, where $\norm{\upsigma}\df\sqrt{\trace(\upsigma\upsigma\transp)}$\,.

\medskip
\item[\hypertarget{D2}{{(D2)}}]
\emph{Boundedness:\/}
The functions $b$ and $\upsigma$ are uniformly bounded, i.e., 
\begin{equation*}
\sup_{{\bf \zeta}\in \prod_{k=1}^{N}\Act^{k}}\, \abs{b(x, {\bf\zeta})} + \norm{\upsigma(x)}^{2} \,\le\, C \qquad \forall\, x\in\RR^{d}\,,
\end{equation*}
for some constant $C>0$.

\medskip
\item[\hypertarget{D3}{{(D3)}}]
\emph{Nondegeneracy:\/}
For some $\hat{C}_1 > 0$, it holds that
\begin{equation*}
\sum_{i,j=1}^{d} a^{ij}(x)z_{i}z_{j}
\,\ge\,\hat{C}_1 \abs{z}^{2} \qquad\forall\, x\in \RR^{d}\,,
\end{equation*}
and for all $z=(z_{1},\dotsc,z_{d})\transp\in\RR^{d}$, where $a\df \frac{1}{2}\upsigma \upsigma\transp$\,.
\end{itemize}

The objective here is to minimize the following cost function
\begin{align}\label{decCostCrit}
\Exp_{x}^{{\bf U}} \bigg[ \int_0^T c(X_t,U^1_t,\cdots,U^N_t) dt + c_T(X_T) \bigg], 
\end{align}
where the running cost $c: \mathbb{R}^d \times \prod_{k=1}^{N}\mathbb{U}^{k} \rightarrow [0,\infty)$ and the terminal $c_T:\mathbb{R}^d \rightarrow [0,\infty)$ are given functions. Throughout this section we assume that the cost functions $c, c_T$ are bounded continuous. 

As in the partially observable case, let 
\[G_t:= \prod_{i=1}^N e^{\int_0^t g^i(X_s)dY^i_s - \frac{1}{2}\int_0^t |g^i(X_s)|^2 ds}\]
with $\int_0^t g^i(X_s)dY^i_s$ being a stochastic integral with respect to the random process $Y_s$. Since $g^i, i= 1,\dots , N$ are bounded, by Novikov's criterion (see \cite[Theorem~5.3  p. 152]{IkedaWatanabe89}), we have $\{G_t\}$ is a nonnegative martingale with mean one. Thus it is legal family of Radon-Nikodym derivatives\,. Now by Girsanov's theorem, we define the probability measure $Q$ as follows:
\begin{align*}
\frac{d\Pm}{dQ} := G_T\,,
\end{align*} under this new measure $Q$ we get an equivalent decoupled measurement model: 
\begin{align}
dX_t &= b(X_t,U^1_t,\cdots,U^N) dt + \upsigma(X_t) dW_t, \nonumber \\
dY^i_t &= dB^i_t, \qquad i=1,\cdots, N.\label{sde11}
\end{align}

This relation allows us to view the decentralized stochastic control problem as one with independent measurements, with the dependence pushed to the Radon-Nikodym derivative.

We define the admissible policies to be the following: we define an admissible control as a probability measure on $C([0,T]\times \mathbb{R}^{M})\times {\cal P}_{\lambda} ([0,T] \times \mathbb{U}^i)$ with its fixed marginal on $C([0,T]\times \mathbb{R}^{M})$ be the Wiener measure. In addition, under the new measure $Q$, $Y_r^i - Y_t^i$ is independent of $\{X_0, W_{\cdot}, Y_s^i, m_s^i; s\leq t\}$, for any $0\leq t \leq r \leq T$ and independent from all $(m_{\cdot}^k,Y_{\cdot}^k), k \neq i$\,. Let $\Udws$ denote the space of such {\it decentralized wide sense admissible} policies, where we endow this space with the weak convergence topology\,. Without loss of generality, a typical element of $\Udws$ is denoted by ${\bf m} = (m^1, \cdots, m^N)$\,.

We state the following on existence and near optimality of discrete-time approximations.  

\begin{theorem}\label{existenceApprDecenLocalMeas}
Suppose that Assumptions \hyperlink{D1}{{(D1)}}, \hyperlink{D2}{{(D2)}} and \hyperlink{D3}{{(D3)}} hold. Then, we have 
\begin{itemize}
\item[(i)] The total cost
\begin{align}\label{newCostTiltedMeasure}
\cJ_T(x,{\bf m}) = \Exp_{x, {Q}}^{{\bf m}} \bigg[&\prod_{i=1}^N (e^{\int_0^t g^i(X_s)dY^i_s - \frac{1}{2}\int_0^t |g^i(X_s)|^2 ds)} \bigg(\int_0^T c(X_t,m^1_t,\cdots,m_t^N) dt\nonumber\\
& + c_T(X_T) \bigg) \bigg], 
\end{align}
as a function of policy is continuous over the space of wide-sense admissible policies $\Udws$.
\item[(ii)] There exists an optimal control policy in $\Udws$.
\item[(iii)] For every $\epsilon > 0$, there exists an ordinary piece-wise constant control policy in $\Udws$ which is $\epsilon$-optimal. 
\end{itemize}
\end{theorem}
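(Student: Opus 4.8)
The plan is to transfer the measure-transformation machinery of the fully and partially observed cases (Theorems~\ref{existencewithMeasTransformation} and \ref{existenceApprPartialInf}) to the decentralized product structure, so that the three required ingredients---continuity of the tilted cost, compactness of the policy space, and density of quantized-support policies---are each established factorwise across the $N$ controller--measurement pairs. I work throughout under the decoupled model \eqref{sde11}, in which the $Y^i$ are independent Wiener processes and the dependence on the observation functions $g^i$ has been absorbed into the Radon-Nikodym derivative $G_T = \prod_{i=1}^N e^{\int_0^T g^i(X_s)\,dY^i_s - \frac{1}{2}\int_0^T |g^i(X_s)|^2\,ds}$ appearing in \eqref{newCostTiltedMeasure}.

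For part (i), I would take a sequence of policies in $\Gamma_{DWS}$ converging weakly to $\zeta$ and apply Skorohod's representation theorem to realize the associated joint relaxed controls $m^n = (m^{n,1},\dots,m^{n,N})$ on a common probability space with $m^n(\omega)\to m(\omega)$ almost surely. Since $b$ and $\upsigma$ satisfy \hyperlink{D1}{(D1)}--\hyperlink{D3}{(D3)} uniformly in $y$, the weak-convergence/tightness argument of Theorem~\ref{contConvPathM} applies to \eqref{sde11} and yields $X^n(\omega)\to X(\omega)$ in $C([0,T];\RR^N)$. Each $g^i$ being bounded and continuous, the single-agent estimate of Lemma~\ref{SomnathLemma} applies to each exponential factor, giving $L^1$ convergence factor by factor; because boundedness of $g^i$ controls all exponential moments of the factors, the family $\{G^n_T\}$ is uniformly integrable, so almost sure convergence of the finite product upgrades to $G^n_T\to G_T$ in $L^1$. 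Finally, repeating the triangle-inequality decomposition from the proof of Lemma~\ref{contLemma11}, with $c,c_T$ bounded and $c(x,\cdot)$ continuous, the tilted cost in \eqref{newCostTiltedMeasure} converges, establishing continuity of $J$ on $\Gamma_{DWS}$.

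For part (ii), existence follows by combining (i) with compactness of $\Gamma_{DWS}$, which I would establish as in Lemma~\ref{compactnessLemma11}. Each coordinate space ${\cal P}({\cal P}_\lambda([0,T]\times\mathbb{U}^i))$ is compact by Prohorov's theorem (the $\mathbb{U}^i$ are compact and the time-marginals are fixed to Lebesgue measure), hence so is their finite product. The admissibility constraints defining $\Gamma_{DWS}$ are of two kinds: per-agent non-anticipativity of $m^{i}_{[0,t]}$ relative to the future increments $Y^i_r-Y^i_t$, and cross-agent independence of the pairs $(U^i,Y^i)$; both cut out closed subsets, since joint independence of finitely many coordinates is preserved under weak convergence (as in the proof of \cite[Lemma 2.3]{FlPa82} and \cite[Theorem 5.6]{YukselWitsenStandardArXiv}). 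Thus $\Gamma_{DWS}$ is a closed subset of a compact space and the continuous functional $J$ attains its infimum.

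For part (iii), I would apply the density of non-anticipative relaxed controls with quantized support in time and action \cite[Theorem 2.3.1]{ABG-book} to each agent's marginal policy separately. Because the decentralized constraint factorizes the joint law across agents, quantizing each coordinate independently preserves both the per-agent non-anticipativity and the cross-agent independence, producing a piece-wise constant approximant still lying in $\Gamma_{DWS}$; continuity from (i) then delivers $\epsilon$-optimality. The step I expect to be the main obstacle is precisely the closedness and quantization-stability of the cross-agent independence constraint: unlike the single-agent arguments underlying Theorem~\ref{existenceApprPartialInf}, one must verify that the weak limit and the quantized approximant still factorize across the $N$ pairs $(U^i,Y^i)$---which reduces to preservation of finite-coordinate independence under weak convergence, the one genuinely new structural ingredient here.
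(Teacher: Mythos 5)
Your proposal is correct and follows essentially the same route as the paper: work under the decoupled model \eqref{sde11}, obtain convergence of the state process as in Theorem~\ref{contConvPathM}, transfer the Radon--Nikodym continuity of Lemma~\ref{SomnathLemma} to the product of exponential factors, conclude continuity of the tilted cost as in Lemma~\ref{contLemma11}, and obtain compactness and quantized-policy density agentwise using preservation of independence under weak convergence. Your added detail on uniform integrability of the product $G^n_T$ and on the closedness/quantization-stability of the cross-agent independence constraint simply makes explicit what the paper leaves as citations to \cite[Lemma 2.3]{FlPa82} and \cite[Theorem 4.5]{saldiyukselGeoInfoStructure}.
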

\textbf{Proof.} (i)-(ii) Once we have a decoupled model, continuity and existence follows by the similar argument as in the proof of Lemma \ref{contLemma11}, Lemma \ref{compactnessLemma11} and Theorem~\ref{contConvPathM} (since the signal process $X_{\cdot}$ is not fixed in this setup): Let $\{{\bf m}^n\}$ be a sequence in $\Udws$ such that ${\bf m}^n \to {\bf m}$. Then arguing as in the proof of Theorem~\ref{contConvPathM}, it is easy to see that the solution $X^n$ of (\ref{sde11}) associated to ${\bf m}^n$ converges to the solution $X$ of (\ref{sde11}) associated to ${\bf m}$\,. Also, since $g^i$, $i=1,\dots ,N$ are bounded and continuous, arguing as in the proof of Lemma~\ref{SomnathLemma}, it follows that $\frac{d{P}}{d{Q}}(X_{[0,T]}^n,\bf{Y}_{[0,T]})$ converges to $\frac{d{P}}{d{Q}}(X_{[0,T]},\bf{Y}_{[0,T]})$\,. Now, following the steps as in the proof of Lemma~\ref{contLemma11}, we obtain the continuity of the value\,. 

Each ${\bf m}\in \Udws$ is a product measure across agents, and this set of measures is weakly compact as each coordinate is so (e.g. by \cite[Theorem 4.5]{saldiyukselGeoInfoStructure}) and since the independence of the measures on $Y^i_t,U^i_t$ from future increments is preserved under weak convergence, see e.g. the proof of \cite[Lemma 2.3]{FlPa82}). 
These lead to the compactness-continuity conditions and accordingly an existence result for optimal policies follows. 

(iii) For each agent, by showing that the set of non-anticipative measures ${\cal P}_{\lambda}([0,T]\times\mathbb{U})$ (with fixed marginal on $C([0, T]\times \mathbb{R}^{M})$ be the Wiener measure) which have quantized support (in both time and control) is dense (as in Theorem \ref{existencewithMeasTransformation}), we have that piece-wise constant control policies which are non-anticipative are near optimal for the product measure as well. This allows one to approximate a continuous-time process with a (sampled) discrete-time process and the machinery developed for discrete-time optimal control will be applicable. 
\qed

\subsection{Decentralized Model with Coupled Dynamics and Local State}\label{CoupledDyLocalSt}
Instead of (\ref{sdeDec1}), consider a collection of $N$ agents with coupled dynamics given as
\begin{align}
dX^i_t &= b^i(X^i_t,U^i_t)dt + b^i_0({\bf X}_t, {\bf U}_t) dt + \upsigma^i(X_t^i) dW^i_t, \qquad i=1,\cdots, N; \label{sdeDecLoc1} 
\end{align}
Here, $b^i : \RR^d\times \Act^{i}\to \RR^{d}$, $b^i_{0}: (\RR^{d})^N \times \prod_{k=1}^{N}\Act^{k} \to \RR^d$ and $\upsigma^{i}: \RR^{d} \to \RR^{d\times d}$ are given functions and $W^i$ are independent standard Wiener processes with values in $\mathbb{R}^d$\,, $i=1,\cdots, N$\,.
We assume that the functions $b^i, b^i_0, \upsigma^i$ for $i=1,\cdots, N$, satisfy the following:
\begin{itemize}
\item[\hypertarget{D1}{{$\hat{(D1)}$}}]
\emph{Local Lipschitz continuity:\/}
For $i=1,\cdots, N$, we have $b^i : \RR^d\times \Act^{i}\to \RR^{d}$ and $b^i_{0}: (\RR^{d})^N \times \prod_{k=1}^{N}\Act^{k} \to \RR^d$ are jointly continuous and for some constant $C_{R}>0$ depending on $R>0$, and for all $x_1,x_2\in \sB_R$, we have
\begin{equation*}
\norm{\upsigma^i(x_1) - \upsigma^i(x_2)}^2 \,\le\, C_{R}\,\abs{x_1 - x_2}^2
\end{equation*}

\item[\hypertarget{D2}{{$\hat{(D2)}$}}]
\emph{Boundedness:\/}
The functions $b^i$,  $b_0^i$ and $\upsigma^i$, $i=1,\cdots, N$, are uniformly bounded, i.e.,for some constant $C>0$,
\begin{equation*}
\sup_{\zeta\in \Act^{i}}\, \abs{b^i(x, \zeta)} + \norm{\upsigma^i(x)}^{2} \,\le\, C \qquad \forall\, x\in\RR^{d}\,,
\end{equation*}
\begin{equation*}
\sup_{{\bf \zeta}\in \prod_{k=1}^{N}\Act^{k}}\, \abs{b_0^i({\bf x}, {\bf \zeta})}\,\le\, C \qquad \forall\, {\bf x}\in(\RR^{d})^N\,,
\end{equation*}

\item[\hypertarget{D3}{{$\hat{(D3)}$}}]
\emph{Nondegeneracy:\/}
For some $\hat{C}_1 > 0$, it holds that
\begin{equation*}
\sum_{i,j=1}^{d} a^{k, ij}(x)z_{i}z_{j}
\,\ge\,\hat{C}_1 \abs{z}^{2} \qquad\forall\, x\in \RR^{d}\,,\, k=1,\cdots, N\,, 
\end{equation*}
and for all $z=(z_{1},\dotsc,z_{d})\transp\in\RR^{d}$, where $a^k\df \frac{1}{2}\upsigma^{k} (\upsigma^k)^{\transp}$\,.
\end{itemize} In view of \hyperlink{D3}{{$\hat{(D3)}$}}, it is easy to see that $(\upsigma^i)^{-1}$ exists and is bounded for all $i=1,\cdots, N$\,.

The objective here is to minimize the following cost function
\begin{align}\label{decCostCritA}
\Exp_{x}^{{\bf U}} \bigg[ \int_0^T c({\bf X}_t,U^1_t,\cdots,U_t^N) dt + c_T({\bf X}_T) \bigg], 
\end{align}
where the running cost $c: (\mathbb{R}^d)^N \times \prod_{k=1}^{N}\mathbb{U}^{k} \rightarrow [0,\infty)$ and the terminal cost $c_T:(\mathbb{R}^d)^N \rightarrow [0,\infty)$ are assumed to be bounded continuous functions\,. Also, we assume that the control policies are only locally measurable, that is $U^i_t$ is measurable with respect to $\sigma(X^i_{[0,t]})$ for all $t \in [0,T]$.

Since $\upsigma^i$ is invertible (follows from \hyperlink{D3}{{$\hat{(D3)}$}}), for $i=1,\cdots, N$, we can rewrite (\ref{sdeDecLoc1}) as
\begin{align}
dX^i_t &= \upsigma^i(X_t^i) (\upsigma^i)^{-1}(X_t^i)\left(b^i(X^i_t,U^i_t) +  b^i_0({\bf X}_t, {\bf U}_t) \right)  dt + \upsigma^i(X_t^i) dW^i_t\,. \label{sdeDecLoc11} 
\end{align} Since $b^i, b_0^i$ and $(\upsigma^i)^{-1}$ are bounded, by Novikov's criterion (see \cite[Theorem~5.3  p. 152]{IkedaWatanabe89}), we have
$$\hat{Z}_t := \prod_{k=1}^{N}e^{( \int_0^T  (\upsigma^i)^{-1}(X_t) \left(b^1(X^i_s,U^i_s) +  b^i_0({\bf X}_s, {\bf U}_s) \right) dW_t^i-\frac{1}{2} \int_0^T |(\upsigma^i)^{-1}(X_t) \left(b^1(X^i_s,U^i_s) +  b^i_0({\bf X}_s, {\bf U}_s) \right)|^2 dt)}.$$ is a nonnegative martingale with mean one. Therefore it is legal family of Radon-Nikodym derivatives\,. Now by Girsanov's theorem as in \cite{SelkYuksel2023}, we define a new probability measure $Q$ as follows:
$$\frac{d\Pm}{d Q}=\hat{Z}_T$$ 
and under this new measure $Q$ we get the following decoupled (non-interacting) agent model: 
\begin{align}
dX^i_t &=  \upsigma^i(X_t^i) dW^i_t, \qquad i=1,\cdots, N\,. \label{sdeDecLocNI1} 
\end{align}

The idea is now to push the dependence in the dynamics to a dependence in the cost. Such an application arises in mean-field models. In this case, by Lemma \ref{SomnathLemma} we obtained on the continuous dependence of the Radon-Nikodym derivative on control policies will be useful. As in Section \ref{withMeasureTransformSection}, we define a relaxed wide-sense admissible control policy by first placing the Young topology on the control action space, by viewing the control process to be a probability measure on $C([0,T]; \mathbb{R}^d)\times {\cal P}_{\lambda}([0,T] \times \mathbb{U}^i)$ with its fixed marginal on $C([0,T]; \mathbb{R}^d)$ to be the Wiener measure\,. We require also that $m^i_{[0,t]}$  be independent of $W^i_s - W^i_t, s > t$ for every $t \in [0,T]$ and independent from $m^j,W^j_s, \,\, j \neq i, s \in [0,T]$. We call again such policies decentralized locally wide sense admissible policies, and denote with $\Udws$. Without loss of generality, a typical element of $\Udws$ is denoted by ${\bf m} = (m^1, \cdots, m^N)\,$. Also, it is easy to see that in the trasformed model, the measure on the path space is fixed\,.

Now, following the analysis in Theorem \ref{existencewithMeasTransformation}, \ref{existenceApprDecenLocalMeas}, we have the following theorem, which proves the existence of a team optimal policy and near optimality of piece-wise constant control policy\,.

\begin{theorem}\label{existenceApprCoupledDynLocalState}
Suppose that Assumptions \hyperlink{D1}{{$\hat{(D1)}$}}, \hyperlink{D2}{{$\hat{(D2)}$}} and \hyperlink{D3}{{$\hat{(D3)}$}} hold. Then,
\begin{itemize} 
\item[(i)] The total cost
\begin{align}
\cJ_T({x, \bf m}) =& \Exp_x^{{\bf m}} \bigg[ \hat{Z}_T\bigg(\int_0^T c({\bf X}_t,m^1_t,\cdots,m^N) dt + c_T({\bf X}_T) \bigg) \bigg], \label{newCostTiltedMeasureDec}
\end{align}
is continuous over the space of wide-sense admissible policies $\Udws$.
\item[(ii)] There exists an optimal control policy in $\Udws$.
\item[(iii)] For every $\epsilon > 0$, there exists an ordinary piece-wise constant control policy in $\Udws$ which is $\epsilon$-optimal. 
\end{itemize}
\end{theorem}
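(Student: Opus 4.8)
The plan is to follow the template established for the local-measurement decentralized model in Theorem \ref{existenceApprDecenLocalMeas}, except that here the coupling has been pushed into an exponential Girsanov weight rather than into a measurement tilt. As in all the previous theorems, the three assertions reduce to a single analytic fact: once (i) continuity of $J$ on $\Gamma_{DWS}$ is in hand, (ii) existence follows by compactness of $\Gamma_{DWS}$, and (iii) near optimality of piece-wise constant policies follows by a denseness argument. To begin (i), I would fix ${\bf \zeta}_n \to {\bf \zeta}$ in $\Gamma_{DWS}$ and first establish weak convergence of the joint law of the decoupled state--action process. Under the non-interacting model (\ref{sdeDecLocNI1}) each agent's pair $(X^i,m^i)$ forms a closed system driven by its own noise $B^i$, with $U^i$ adapted only to $\sigma(X^i_{[0,t]})$, so by the independence constraints defining $\Gamma_{DWS}$ the joint strategic measure factorizes as $\prod_i$(law of $(X^i,m^i)$). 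By Theorem \ref{contConvPathM1} each marginal path measure converges, $\eta^i_n \to \eta^i$, and since a product of weakly convergent sequences of mutually independent measures converges weakly to the product of the limits, the full law of $({\bf X}^n,{\bf m}^n)$ converges. I would then invoke Skorohod's representation theorem to pass to a common probability space on which $m^{i,n}(\omega)\to m^i(\omega)$ weakly and the $C([0,T];\RR^N)$-valued $X^{i,n}(\omega)\to X^i(\omega)$ hold almost surely for every $i$.

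Next I would prove the analog of Lemma \ref{SomnathLemma} for the present weight, namely that
\[
\prod_{i=1}^N \exp \biggl[\int_0^T (\sigma^i)^{-1}(X^i_t)\, b^i_0({\bf X}_t,{\bf U}_t)\, dB^i_t - \frac{1}{2} \int_0^T \bigl|(\sigma^i)^{-1}(X^i_t)\, b^i_0({\bf X}_t,{\bf U}_t)\bigr|^2\, dt\biggr]
\]
is continuous in $L^1$ along ${\bf \zeta}_n \to {\bf \zeta}$. For each $i$ the Lebesgue-integral term in the exponent converges almost surely by the generalized dominated convergence theorem \cite[Theorem 3.5]{serfozo1982convergence}, using that $b^i_0$ is bounded by \hyperlink{D2}{{$\hat{(D2)}$}}, that $(\sigma^i)^{-1}$ is bounded by \hyperlink{D3}{{$\hat{(D3)}$}}, and that both are continuous in the action argument; the stochastic-integral term is handled exactly as in the proof of Lemma \ref{SomnathLemma}, where $L^2(dt\otimes d\Prob)$ convergence of the integrand combined with the It\^o isometry forces the stochastic integrals to converge, and the uniform boundedness of the integrands supplies the uniform integrability that upgrades the resulting in-probability convergence of the exponentials to $L^1$. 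Combining this with the boundedness of $c$ and $c_T$ and the almost sure convergence of $({\bf X}^n,{\bf U}^n)$, a triangle-inequality decomposition identical to the one in Lemma \ref{contLemma11} then yields $J({\bf \zeta}_n)\to J({\bf \zeta})$.

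For part (ii), I would argue compactness of $\Gamma_{DWS}$: each coordinate $\zeta^i$ lives in ${\cal P}({\cal P}_{\lambda}([0,T]\times\mathbb{U}^i))$, which is compact by Lemma \ref{compactnessLemma11}(i), so the product over the $N$ agents is compact, and the defining constraints (independence of each $m^i_{[0,t]}$ from the future increments $B^i_s-B^i_t$, together with mutual independence across agents) are preserved under weak convergence, as in the proof of \cite[Lemma 2.3]{FlPa82} and Lemma \ref{compactnessLemma11}(ii); hence $\Gamma_{DWS}$ is a closed, and therefore compact, subset. Continuity from part (i) together with compactness gives a minimizer. For part (iii), I would apply the denseness of quantized-support non-anticipative measures in ${\cal P}_{\lambda}([0,T]\times\mathbb{U}^i)$ (\cite[Theorem 2.3.1]{ABG-book}, as already used in Theorem \ref{existencewithMeasTransformation}) to each agent separately, approximating ${\bf \zeta}$ by a product of piece-wise constant policies; continuity of $J$ renders this approximant $\epsilon$-optimal.

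The main obstacle is the continuity of the Girsanov weight, and specifically the stochastic-integral term $\int_0^T (\sigma^i)^{-1}(X^i_t)\, b^i_0({\bf X}_t,{\bf U}_t)\, dB^i_t$. Unlike in Lemma \ref{SomnathLemma}, where the integrand depends only on the single state driven by the same Brownian motion, here the integrand $b^i_0({\bf X}_t,{\bf U}_t)$ depends on the \emph{entire} vector of states and actions while the integrator is only agent $i$'s noise. What makes the argument go through is precisely the product structure of the decoupled reference model: under it the $(X^i,U^i)$ are independent across agents, so weak convergence of the full joint law follows from convergence of the marginals, and the coordinatewise $L^2$ convergence needed for the It\^o-isometry step is available. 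The delicate point I would verify carefully is that the local measurability of the policies (each $U^i$ adapted to $\sigma(X^i_{[0,t]})$) is consistent with this factorization and is itself preserved in the weak limit, so that the limiting ${\bf \zeta}$ indeed lies in $\Gamma_{DWS}$.
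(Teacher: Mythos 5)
Your proposal is correct and follows essentially the same route as the paper: continuity via the product structure of the decoupled model (weak convergence of marginals implying weak convergence of the product strategic measure), the Lemma \ref{SomnathLemma}-type $L^1$ continuity of the exponential Girsanov weight, compactness of $\Gamma_{DWS}$ for existence, and denseness of quantized-support non-anticipative measures for part (iii). Your explicit discussion of the cross-agent dependence of the integrand $b^i_0({\bf X}_t,{\bf U}_t)$ against agent $i$'s noise, and why the factorized reference model resolves it, is a point the paper leaves implicit but is consistent with its argument.
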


\textbf{Proof.} (i)-(ii) Once we have a decoupled model, continuity and existence follows by similar argument as in Lemma \ref{contLemma11}, Lemma \ref{compactnessLemma11} and Theorem~\ref{existencewithMeasTransformation}. As in the proof of Theorem \ref{existenceApprDecenLocalMeas}, the weak convergence of marginal measures implies the weak convergence of the product measures (e.g. by \cite[Theorem 4.5]{saldiyukselGeoInfoStructure}). Accordingly, the value is continuous in ${\bf m}$. Note that the continuity of the exponential Radon-Nikodym derivative term in the policies follows as discussed in Lemma \ref{SomnathLemma}. Thus, by the continuity of cost as a function of control policy and the compactness of $\Udws$ ensures the existance of optimal control policy in $\Udws$\,.

(iii) For each agent, by showing that the set of measures ${\cal P}_{\lambda}([0,T]\times\mathbb{U})$ (with fixed marginal on $C([0, T]; \mathbb{R}^d)$ to be Wiener measure) which have quantized support (in both time and control) is dense (see Theorem \ref{existencewithMeasTransformation}), we have that piece-wise constant control policies, which are furthermore non-anticipative, are near optimal for the product measure as well. This allows one to approximate a continuous-time process with a (sampled) discrete-time process and the machinery developed for discrete-time optimal control will be applicable. 
\qed

Exploiting the near optimality of the piece-wise constant policies, in the next section we apply discrete-time approximations.

\section{Near Optimality of Control Policies Designed for Discrete-time Models via Sampling}

In view of the results presented, we have that piece-wise constant policies are near optimal. These then lead to discrete-time models whose solutions will be near optimal, and applicable to the original problems. 

For each of the information structures below, we will consider the following arguments: (i) We obtain a sequence of discrete-time models ${\cal T}_n$ arrived from piece-wise constant control policies applied to the true model ${\cal T}$. (ii) We show that the solution to the optimal discrete-time model $\cJ_{T}({\cal T}_n)$ leads to a solution which is near optimal. The direction, $\lim_{n \to \infty} \cJ_{T}({\cal T}_n) \leq \cJ_{T}(\cal  T) + \epsilon$ follows from the analysis above and the direction $\lim_{n \to \infty} \cJ_{T}({\cal T}_n) \geq \cJ_{T}(\cal  T)$ follows from the fact that restricting to piece-wise constant policies cannot lead to a better policy when compared with arbitrary admissible policies. (iii) We then show that the policy obtained to solve $\cJ_{T}({\cal T}_n)$ can be applied to ${\cal T}$ (and thus the approach is constructive) and is near optimal for large $n$.

\subsection{Fully Observed Setup}
Consider the fully observed setup discussed in Section \ref{sectionFullyObserved} with model (\ref{diffProcess}) and cost criterion (\ref{cost1}). That is, with dynamics \[dX_t = b(X_t,U_t) + \upsigma(X_t)dW_t,\]
and cost criterion
\[\cJ_{T}(x, U) = \Exp_x^{U}[\int_0^T c(X_s,U_s) ds + c_{T}(X_T)].\]

{\bf Discrete-Time Model to be Solved for Near Optimal Solutions:}
Let $X^h, U^h$ be the solution of the sampled process corresponding to (\ref{diffProcess}) with piece-wise constant policies.

We define the transition probabilities as follows: For any $k\in\ZZ_{+}$,  distribution of the state $X_k^h$ conditioned on the past state and action variables, is determined by the diffusion process (\ref{diffProcess}), such that, conditioned on $(X_{k-1}^h,\dots,X_0^h,U^h_{k-1},\dots,U_0^h)$, $X_k^h$ has the same distribution as 
\begin{align}\label{DTmodelCTContFully}
X_{kh} = X_{k-1}^h +\int_{(k-1)h}^{kh}b(X_s,U^h(s))ds+\int_{(k-1)h}^{kh}\upsigma(X_s)dW_s
\end{align} 
where $U^h(s)=U_{k-1}^h$ for all $(k-1)h \leq s < kh$ (that is the control $U^h$ is piece-wise constant in time). Hence, for any $A\in\cB(\Rd)$
\begin{align*}
Pr(X_k^h\in A|X^h_{[0,k-1]},U^h_{[0,k-1]})=\mathcal{T}_h(A|X^h_{k-1},U^h_{k-1})
\end{align*}
where ${X^h}_{[0,k-1]},{U^h}_{[0,k-1]}:=X_0^h,\dots,X^h_{k-1},U^h_0,\dots,U^h_{k-1}$, such that 
\begin{align*}
X_k^h \sim \mathcal{T}_h(dx_k|X^h_{k-1},U^h_{k-1})
\end{align*}
where $X_k^h$ determined by (\ref{DTmodelCTContFully}) and where  $\mathcal{T}_h$ is the transition kernel of the Markov chain which is a stochastic kernel from $\Rd\times
\Act$ to $\Rd$.

For the discrete-time model, an {\em admissible policy} is a sequence of control functions $\{U^h_k,\, k\in \ZZ_{+}\}$ such that $U^h_k$ is measurable with respect to the $\sigma$-algebra generated by the information variables
$
I_k^h=\{X_{[0,k]}^h,U_{[0,k-1]}^h\},\,\, k \in \NN,\,\, I_0^h=\{X_0^h\},
$ that is
\begin{equation}
\label{eq_control}
U_k^h=v_k^h(I_k^h),\quad k\in \ZZ_{+},\nonumber
\end{equation}
where $v_k^h$ is a $\mathcal{P}(\Act)$-valued measurable function for $k\in \ZZ_{+}$\,. We define $\Uadm^h$ to be the set of all such admissible policies. Let $\Usm^h := \{U^h\in \Uadm^h: U_k^h = v^h(X_k^h)\,\,\text{for some measurable map}\,\, v^h:\Rd\to \mathcal{P}(\Act)\}$ be the set of all stationary Markov policies\,.

The cost can be written as, with $N_h = \frac{T}{h}$,
\begin{equation}\label{EDisCost1A}
\cJ_{T,h}(U^h) := \Exp_{x}^{U^h}[\sum_{k=0}^{N_h - 1} \hat{c}(X_{kh},U_{kh}) + c_T(X_{N_hh})] 
\end{equation}
where
\[\hat{c}(x,\zeta) =\Exp[\int_0^h c(X_s,U_{0}) ds | X_{0}=x,U_{0}=\zeta]\]
With $n=\frac{1}{h}$, the above define a discrete-time MDP with transition kernel $\mathcal{T}_n$, cost function $\tilde{c}_n$ and total cost $\cJ_{T,n}(x,U)$.

The information structure at time $t$ contains the continuous-time measurements. However, since for such fully observed model, Markov policies are optimal, it suffices to the controller to only use the discrete-time measurements. 

\begin{theorem}\label{TNearOptimFull}
Suppose that Assumptions \hyperlink{A1}{{(A1)}} and \hyperlink{A2}{{(A2)}} hold. Then
\begin{itemize}
\item[(i)] The value of the discrete-time model convergences to the value of the original continuous-time model. 
\item[(ii)] For every $\epsilon > 0$, there exists $h > 0$ so that the solution of the discrete-time approximation gives a policy which is near optimal for the original continuous-time model.
\end{itemize}
\end{theorem}

\begin{proof}
From Theorem~\ref{existencewithMeasTransformation}, we have for every $\epsilon >0$ there exists a piece-wise constant non-anticipative policy $m^\epsilon\in \Uwrc$ which is $\epsilon$-optimal, i.e., $$\cJ_T(x,m^{\epsilon}) \leq  \inf_{m\in \Uwrc} \cJ_T(x,m) + \epsilon\,.$$ Now using this piece-wise constant policy and following the discretization approach as in (\ref{DTmodelCTContFully}) we obtain a discrete-time model $(X^n, {\cal T}_n)$ with the associated total cost $\cJ_{T,n}(x,m^\epsilon)$ given by (\ref{EDisCost1A})\,. Let $\cJ_{T,n}^*$ denotes the optimal (minimum over all possible admissible policies) cost of the discrete-time model. Since, $\cJ_{T,n}^*(x) \leq \cJ_{T,n}(x, m^\epsilon)$, thus it follows that $\lim_{n\to\infty} \cJ_{T,n}^*(x) \leq \inf_{m\in \Uwrc} \cJ_T(x, m) + \epsilon$\,. In view of the fact that restricting to piece-wise constant policies cannot lead to a better policy when compared with arbitrary admissible policies, we deduce that $\lim_{n\to\infty} \cJ_{T,n}^*(x) \geq \inf_{m\in \Uwrc} \cJ_T(x, m)$\,. Since, $\epsilon$ is arbitrary, we obtain $\lim_{n\to\infty}\cJ_{T,n}^*(x) = \inf_{m\in \Uwrc} \cJ_T(x, m)$. Let $v^{n*}$ be an optimal control for the discretized model and $v^{n*}(\cdot)$ be the associated continuous-time interpolated control. Now, by triangle inequality
\begin{equation}\label{ETNearOptimFull1A}
 |\inf_{m\in \Uwrc} \cJ_T(x,m) - \cJ_T(x, v^{n*}(\cdot)) | \leq |\inf_{m\in \Uwrc} \cJ_T(U) - \cJ_{T,n}^*(x) | + |\cJ_{T,n}^*(x) - \cJ_T(x,v^{n*}(\cdot)) |\,.    
\end{equation} We have already shown that the first term of the above inequality goes to $0$. Also, since $\cJ_{T,n}^*(x) = \cJ_{T,n}(x,v^{n*})$ and $\cJ_{T,n}(x, v^{n*})$ is the discretization of $\cJ_T(x, v^{n*}(\cdot))$, we have that $\cJ_{T,n}^*(x)$ and $\cJ_T(x, v^{n*}(\cdot))$ converge to the same limit as $n\to \infty$\,. Thus, we conclude that
$$\lim_{n\to \infty} \cJ_T(x,v^{n*}(\cdot)) = \inf_{m\in \Uwrc} \cJ_T(x,m)\,.$$ This completes the proof of the theorem\,.
\end{proof}

The above apply also to the partially observed and decentralized models.

\subsection{Partially Observed Setup}
Consider the setup in Section \ref{partiallyObsSec} with dynamics (\ref{EPartialStat1}) and criterion \ref{criterionPOMDPCT}.

{\bf Discrete-Time Model to be Solved for Near Optimal Solutions:}
As in the fully observable case, we consider be a piece-wise constant control policy  $U^{h}$ with
\[U^{h}(s) = U_{kh}, \qquad kh \leq s < (k+1)h\]
and let $X^h,Y^h$ be the solution of the sampled process corresponding to (\ref{EPartialStat1}) with piece-wise constant policy $U^{h}$. We have that
\begin{align}
X_{(k+1)h} &= X_{kh} + \int_{kh}^{(k+1)h} b(X_{s},U_{kh})ds + \int_{kh}^{(k+1)h} \upsigma(X_{s}) dW_s \nonumber \\
Y_{t} &= Y_{kh} + \int_{kh}^{t} g(X_s) ds + \int_{kh}^{t} dB_s, \quad t \in [kh, (k+1)h) \label{sdePartial2}
\end{align}

The information structure at time $t$ contains the continuous-time measurements. However, unlike the fully observed model considered above, for partially observed models we cannot only consider the discrete-time measurements and accordingly the measurements are to be continuous-time measurements.

Thus, one defines a discrete-time model in which $X_k^h:=X_{kh}$ and $U_k^h:= U_{kh}$ for $k \in \mathbb{Z}_+$, and the path-valued discrete-time measurement ${\bar Y}_k^h = Y_{[kh,(k+1)h)}$, for $k \in \mathbb{Z}_+$. The cost can be written as, with $N_h = \frac{T}{h}$,
\begin{align}\label{discretetimeApprCost}
\Exp_{x}^{U^h}[\sum_{k=0}^{N_h - 1} \hat{c}(X_{kh},U_{kh}) + c_{T}(X_{N_hh})] 
\end{align}
with
\[\hat{c}(x,\zeta) = \Exp_{x}[\int_0^h c(X_s,U_{0}) ds | X_{0}=x,U_{0}=\zeta]\]
With $n=\frac{1}{h}$, the above define a discrete-time POMDP with transition kernel ${\cal T}_n$ and cost function $\tilde{c}_n$. 
Following the proof technique as in Theorem~\ref{TNearOptimFull} (and Theorem \ref{existencewithMeasTransformation}), we obtain the following near- optimality result for partially observable model\,.
\begin{theorem}\label{TNearOptimPartial}
Suppose that Assumptions \hyperlink{A1}{{(A1)}} and \hyperlink{A2}{{(A2)}} hold\,. Then
\begin{itemize}
\item[(i)] The optimal value of the discrete-time model convergences to the optimal value of the original continuous-time model. 
\item[(ii)] For every $\epsilon > 0$, there exists $h > 0$ so that the solution of the discrete-time approximation gives a policy which is near optimal for the original continuous-time model.
\end{itemize}
\end{theorem}
The result above, however, while involves discrete-time control and state, requires having access to the path-valued measurements. It would be desirable to obtain a discrete-time model with discrete-time measurements $Y_{kh}$ in the original measurement space. That is, at time $kh$, we would like to have $U_{kh}=\gamma(Y_{ih}, i \in \{0,1,\cdots, k\})$ under an admissible policy $\gamma$ (which is a $\Prob(\Act)$ valued measurable map)\,. The following result, building on Lusin's theorem, achieves this. The method involves:
\begin{itemize}
\item Establishing a new probability space where measurements are independent from the state process.
\item Demonstrating that any measurable policy remains continuous on a set with measure $(1-\epsilon)$ for any $\epsilon >0$ within the measurement process space.
\item Defining the measurement process at each discrete sample point and then considering the discretized measurements.
\end{itemize}
As in Section \ref{partiallyObsSec}, we apply measure transformation but to the discrete-time model (\ref{sdePartial2}), then we obtain the equivalent discrete-time model below:
\begin{align}
X_{(k+1)h} &= X_{kh} + \int_{kh}^{(k+1)h} b(X_{s},U_{kh})ds + \int_{kh}^{(k+1)h} \upsigma(X_{s}) dW_s \nonumber \\
Y_{t} &= Y_{kh} + \int_{kh}^{t} dB_s, \quad t \in [kh, (k+1)h) \label{sdeApr2}
\end{align}

In the above, we view the measurement in the interval $[kh,(k+1)h]$ as a continuous path-valued measurement. Measure change with discrete-time measurements, inspired from Girsanov, have been adopted extensively, see \cite{wit88} (see \cite{hogeboom2021sequential} for a review). Note that the absolute continuity condition to allow for measure transformation is applicable for the discrete-time setup as a corollary of the continuous-time setting Section \ref{partiallyObsSec}. In this case, the policies 
$$U_{kh} = \gamma(\{Y_s, s \leq kh\})$$
are such that on a set of measure $1-\epsilon,$ $\gamma=\gamma_C^{\epsilon}$ for a continuous function $\gamma_C^{\epsilon}$ (for any $\epsilon > 0$). Furthermore, the process $Y_{[0,kh]}$ and its piece-wise constant interpolation $\tilde{Y}_t^{h} = \sum_{i=1}^{k} Y_{ih} 1_{t \in [ih,(i+1)h]}$ are so that as $h \to 0$
$$ \|Y - \tilde{Y}^{h}\| \to 0.$$
    The absolute continuity condition permitting measure transformation is applicable in the discrete-time setting as a consequence of the continuous-time framework discussed in Section \ref{partiallyObsSec}.

Accordingly, we have that the cost (\ref{discretetimeApprCost}) under $\gamma$ is at most $2 (k+1)\|c\|_{\infty} \epsilon$ different from the cost of its continuous approximation $\gamma_C$ and the continuous policy $\gamma_C$ asymptotically, as $h \to 0$, attains the same cost by having access to the sampled measurements; see \cite[Prop. 6.1]{saldiyukselGeoInfoStructure}. We then state the following refinement to Theorem \ref{TNearOptimPartial} with a truly regular discrete-time Partially Observable Markov Decision Process (POMDP) approximation\,.
    
\begin{theorem}
Suppose that Assumptions \hyperlink{A1}{{(A1)}} and \hyperlink{A2}{{(A2)}} hold\,. Then
\begin{itemize}
\item[(i)] The optimal value of the discrete-time model (\ref{sdeApr2}) convergences to the optimal value of the original continuous-time model. 
\item[(ii)] For every $\epsilon > 0$, there exists $h > 0$ so that the solution of the discrete-time approximation gives a policy (i.e., a policy $U_{kh}^{\epsilon}=\gamma^{\epsilon}(Y_{ih}, i \in \{0,1,\cdots, k\})$ obtained in (\ref{sdeApr2})) which is near optimal for the original continuous-time model.
\end{itemize}
\end{theorem}

\subsection{Decentralized Setup}

\subsubsection{Decentralized Model with Local Measurements:}
Consider Section \ref{CoupledLocalMeas} with dynamics (\ref{sdeDec1}) and cost criterion (\ref{decCostCrit}). In particular the model is
\begin{align}
dX_t &= b(X_t,U^1_t,\cdots,U^N) dt + \upsigma(X_t)dW_t, \nonumber \\
dY^i_t &= g^i(X_t) dt + dB^i_t, \qquad i=1,\cdots, N.\label{sdeDec21}
\end{align}

{\bf Discrete-Time Model to be Solved for Near Optimal Solutions.}
Let us consider a piece-wise constant control policy with
\[U^{i,h}(s) = U^i_{kh}, \qquad kh \leq s < (k+1)h, \qquad \qquad i=1,\cdots, N\,. \]
Let $X^h,Y^{i,h}$ be the associated sampled process corresponding to (\ref{sdeDec21}) under the piece-wise constant policy $U^{i,h}, i=1,\cdots, N\,.$ Which is sampled as follows:
\begin{align}
X_{(k+1)h} &= X_{kh} + \int_{kh}^{(k+1)h} b(X_{s},U^1_{kh},\cdots,U^N_{kh})ds + \int_{kh}^{(k+1)h} \upsigma(X_{s}) dW_s \nonumber \\
Y^i_{t} &= Y^i_{kh} + \int_{kh}^{t} g^i(X_s) ds + \int_{kh}^{t} dB^i_s, \quad t \in [kh, (k+1)h). \label{sde2A}
\end{align}

The information structure at time $t$ contains the continuous-time measurements. Again, unlike the fully observed model considered above, for decentralized or partially observed models we cannot only consider the discrete-time measurements and accordingly the measurements are to be continuous-time measurements. But, by the similar argument as in the partially observable case, applying Girsanov's change of measure argument to the discretized model (\ref{sde2A}), 
we can define a new probability measure space in which the measurements of the each individual are independent of the state process. In particular, we obtain the following equivalent discretized model
\begin{align}\label{EDisNMesA}
X_{(k+1)h} &= X_{kh} + \int_{kh}^{(k+1)h} b(X_{s},U^1_{kh},\cdots,U^N_{kh})ds + \int_{kh}^{(k+1)h} \upsigma(X_{s}) dW_s \nonumber \\
Y^i_{t} &= Y^i_{kh} + \int_{kh}^{t} dB^i_s, \quad t \in [kh, (k+1)h)\,,
\end{align}
with policy $U_{kh}^{i} = \gamma^{i}(Y_s^{i}, s\leq kh)$, by Lusin's theorem, for some continuous function $\gamma_c^i$ we have $\gamma^i = \gamma_c^i$ on a set of measure $(1-\epsilon_i)$\,. Moreover, we have that the process $Y_{[0, kh]}^i$ can be approximated by its piece-wise constant interpolations. Since both running/terminal costs are continuous, the cost (\ref{EDisDIsCost}) under a policy $\gamma = (\gamma^1,\dots , \gamma^N)$ (under continuous-time measurements) and its continuous approximation $\gamma_c = (\gamma_c^1, \dots , \gamma_c^N)$ (with discrete-time measurements) are close to each other\,. This enables us to obtain a discrete-time model with discrete-time measurements\,.

Thus, one defines a discrete-time model in which $X_k^h:=X_{kh}$ and $U^{i,h}_k:= U^i_{kh}$ for $k \in \mathbb{Z}_+$, and the path-valued discrete-time measurement ${\bar Y}^{i,h}_k=Y^i_{[kh,(k+1)h)}$, for $k \in \mathbb{Z}_+$. The cost can be written as, with $N_h = \frac{T}{h}$,
\begin{equation}\label{EDisDIsCost}
\Exp_{x}^{{\bf U}^h}[\sum_{k=0}^{N_h - 1} \hat{c}(X_{kh},U^1_{kh},\cdots,U^N_{kh}) + c_{T}(X_{N_hh})] 
\end{equation}
with
\[\hat{c}(x,{\bf \zeta}) = \Exp_{x}[\int_0^h c(X_s,U^1_{0}, \cdots, U^N_0) ds | X_{0}=x,{\bf U}_{0}={\bf \zeta}]\]
With $n=\frac{1}{h}$, the above define a discrete-time decentralized POMDP. Again, for the decentralized model, similar proof technique as in Theorem~\ref{TNearOptimFull}, gives us the following near- optimality result\,.
\begin{theorem}\label{TNearOptimDecentralized}
Suppose that Assumptions \hyperlink{D1}{{(D1)}}, \hyperlink{D2}{{(D2)}} and \hyperlink{D3}{{(D3)}} hold. Then
\begin{itemize}
\item[(i)] The optimal value of the discrete-time model (\ref{EDisNMesA}) convergences to the optimal value of the original continuous-time model. 
\item[(ii)] For every $\epsilon > 0$, there exists $h > 0$ so that the solution of the discrete-time approximation gives a policy (i.e., a policy $U_{kh}^{i,\epsilon}=\gamma^{i,\epsilon}(Y_{rh}, r \in \{0,1,\cdots, k\})$, $i=1,\cdots ,N$ an optimal solution of (\ref{EDisNMesA})) which is near optimal for the original continuous-time model.
\end{itemize}
\end{theorem}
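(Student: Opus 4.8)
The plan is to mirror the three-step template laid out at the start of Section~5 and already executed for the fully observed case in Theorem~\ref{TNearOptimFull}, adapting it to the decentralized measurement model \eqref{sdeDec21}. First I would invoke Theorem~\ref{existenceApprDecenLocalMeas}(iii): for every $\epsilon > 0$ there is a piece-wise constant policy $\zeta^{\epsilon} = (\zeta^{1,\epsilon},\dots,\zeta^{N,\epsilon}) \in \Gamma_{DWS}$ whose cost satisfies $J(\zeta^{\epsilon}) \le \inf_{\zeta \in \Gamma_{DWS}} J(\zeta) + \epsilon$. Applying such a piece-wise constant policy to the true model over a mesh of width $h$ produces the sampled dynamics \eqref{sde2A}, with per-stage cost $\hat{c}(x,{\bf u}) = E[\int_0^h c(X_s,U^1_0,\dots,U^N_0)\,ds \mid X_0 = x, {\bf U}_0 = {\bf u}]$ and total cost \eqref{EDisDIsCost}; with $n = 1/h$ this defines the discrete-time decentralized POMDP with kernel ${\cal T}_n$.

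Next I would establish convergence of the optimal discrete-time value $J_n^*$ to the continuous-time value. The upper bound $\limsup_{n\to\infty} J_n^* \le \inf_{\zeta} J(\zeta) + \epsilon$ follows exactly as in Theorem~\ref{TNearOptimFull}, since $J_n^* \le J_n(\zeta^{\epsilon})$ and the latter is the discretization of $J(\zeta^{\epsilon})$; letting $\epsilon \downarrow 0$ gives one inequality. The reverse bound $\liminf_{n\to\infty} J_n^* \ge \inf_{\zeta} J(\zeta)$ holds because restricting to piece-wise constant policies cannot beat the infimum over all admissible decentralized policies, so each $J_n^*$ corresponds to a (interpolated) feasible policy in $\Gamma_{DWS}$. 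Combining yields $\lim_{n\to\infty} J_n^* = \inf_{\zeta \in \Gamma_{DWS}} J(\zeta)$. Then, writing $v^{n*}$ for an optimal discrete-time policy and $v^{n*}(\cdot)$ for its continuous-time interpolation, the triangle inequality as in \eqref{ETNearOptimFull1A} together with $J_n^* = J_n(v^{n*})$ and the fact that $J_n(v^{n*})$ is the discretization of $J(v^{n*}(\cdot))$ gives $\lim_{n\to\infty} J(v^{n*}(\cdot)) = \inf_{\zeta} J(\zeta)$, establishing near optimality of the constructed policy.

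The delicate point, and the main obstacle, is the passage from the path-valued measurement model \eqref{sde2A} to the genuinely discrete-measurement model \eqref{EDisNMesA}, \eqref{discreteDecenModel}, in which each agent $i$ uses only the sampled observations $U_{kh}^i = \gamma^i(Y_{rh}^i, r \le k)$. Here I would apply Girsanov's change of measure to the discretized system, agent by agent, so that under the new measure each $Y^i$ becomes an independent Wiener process decoupled from the state, with the dependence absorbed into the product Radon-Nikodym derivative $\prod_{i=1}^N e^{\int_0^t g^i(X_s)\,dY^i_s - \frac{1}{2}\int_0^t |g^i(X_s)|^2\,ds}$; the required absolute continuity follows as a corollary of the continuous-time development in Section~\ref{CoupledLocalMeas}. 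I would then invoke Lusin's theorem separately for each agent to obtain, for each $i$, a continuous $\gamma_c^i$ agreeing with $\gamma^i$ on a set of measure $1 - \epsilon_i$, and use that the piece-wise constant interpolation of $Y^i_{[0,kh]}$ converges to $Y^i$ as $h \to 0$. Since $c$ and $c_T$ are bounded and continuous, the cost \eqref{EDisDIsCost} under $\gamma = (\gamma^1,\dots,\gamma^N)$ with path-valued measurements differs from the cost under the continuous approximation $\gamma_c = (\gamma_c^1,\dots,\gamma_c^N)$ with sampled measurements by at most a term of order $\sum_i \epsilon_i$ per stage (of the form $2(k+1)\|c\|_{\infty}\sum_i \epsilon_i$), exactly as in the partially observed refinement. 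The main difficulty is that this Lusin-approximation argument must be carried out simultaneously across all $N$ agents while respecting the decentralized information constraint, i.e. ensuring each approximating $\gamma_c^i$ remains measurable with respect to agent $i$'s own sampled measurements only; I would handle this by treating each coordinate independently, which the product structure of $\Gamma_{DWS}$ permits, and appealing to \cite[Prop.~6.1]{saldiyukselGeoInfoStructure} to conclude that the sampled-measurement continuous policies asymptotically attain the same cost.
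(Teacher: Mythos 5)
Your proposal follows the paper's own argument essentially verbatim: the paper proves this theorem by invoking the near optimality of piece-wise constant policies from Theorem~\ref{existenceApprDecenLocalMeas}, repeating the three-step value-convergence and interpolation argument of Theorem~\ref{TNearOptimFull}, and handling the reduction from path-valued to sampled measurements via the agent-by-agent Girsanov decoupling, Lusin approximation of each $\gamma^i$ by a continuous $\gamma_c^i$, and piece-wise constant interpolation of $Y^i_{[0,kh]}$, exactly as you describe. The proposal is correct and takes the same route as the paper.
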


\subsection{Decentralized Model with Coupled Dynamics and Local State}
We finally consider the model given in Section \ref{CoupledDyLocalSt}.

{\bf Discrete-Time Model to be Solved for Near Optimal Solutions:}
For $1 \leq i \leq N$, let $X^{i,h}$ be the solution of the sampled process corresponding to (\ref{sdeDecLoc1}) with piece-wise constant policies:
\[U^{i,h}_{s} = U^{i}_{kh}, \qquad kh \leq s < (k+1)h,\]
which is given as follows: for $t \in [kh,(k+1)h)$
\begin{align}\label{DTmodelCTConDecCoup1}
X^i_{t} = X^i_{kh} + \int_{kh}^{t} b^i(X^i_{s},U_{kh})ds + \int_{kh}^{t} b^i_0({\bf X}_{s},{\bf U}_{kh})ds+ \int_{kh}^{t} \upsigma(X^i_{s}) dW_s\,.
\end{align}

The information structure at time $t$ contains the continuous-time measurements. Again, unlike the fully observed model considered above, for decentralized or partially observed models we cannot only consider the discrete-time measurements and accordingly the measurements are to be continuous-time measurements. Thus, one defines a discrete-time model in which $X_k^h:=X_{kh}$ and $U^{i,h}_k:= U^i_{kh}$ for $k \in \mathbb{Z}_+$, and the path-valued discrete-time measurement ${\bar X}^{i,h}_k=X^i_{[kh,(k+1)h)}$, for $k \in \mathbb{Z}_+$.
By Girsanov's change of measure argument, under a new probability measure we can have our model 
\begin{eqnarray}\label{DTmodCTContApA}
X^i_{t} = X^i_{kh} + \int_{kh}^{t} \upsigma(X^i_{s}) dW_s, \quad t \in [kh,(k+1)h) 
\end{eqnarray}
which is equivalent to (\ref{DTmodelCTConDecCoup1}). This makes the measurements to be just function of the local states and independent of the actions\,. Then, as in the partially observable case, the Lusin's theorem and piece-wise constant interpolation of the measurements, enable us to consider discrete-time model with discrete-time measurements\,.

The cost can be written as, with $N_h = \frac{T}{h}$,
\[\Exp_{x}^{{\bf U}^h}[\sum_{k=0}^{N_h - 1} \hat{c}({\bf X}_{kh},{\bf U}_{kh}) + c_T({\bf X}_{N_hh})] \]
with
\[\hat{c}({\bf x},{\bf \zeta}) =\Exp_{x}[\int_0^h c({\bf X}_s,{\bf U}_{0}) ds | {\bf X}_{0}={\bf x},{\bf U}_{0}={\bf \zeta}]\]
With $n=\frac{1}{h}$, the above define a discrete-time MDP. For this decentralized model with coupled dynamics and local state, following steps as in the proof of Theorem~\ref{TNearOptimFull}, we obtain the following near- optimality result\,.
\begin{theorem}\label{TNearOptimDecenLocal}
Suppose that Assumptions \hyperlink{D1}{{$\hat{(D1)}$}}, \hyperlink{D2}{{$\hat{(D2)}$}} and \hyperlink{D3}{{$\hat{(D3)}$}} hold. Then
\begin{itemize}
\item[(i)] The optimal value of the discrete-time model (\ref{DTmodCTContApA}) convergences to the optimal value of the original continuous-time model. 
\item[(ii)] For every $\epsilon > 0$, there exists $h > 0$ so that the solution of the discrete-time approximation gives a policy which is near optimal for the original continuous-time model.
\end{itemize}
\end{theorem}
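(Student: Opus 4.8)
The plan is to mirror the template of Theorem~\ref{TNearOptimFull}, now invoking the decentralized existence and approximation result Theorem~\ref{existenceApprCoupledDynLocalState} in place of Theorem~\ref{existencewithMeasTransformation}, and handling the coupling term $b_0^i$ and the local (decentralized) information constraint via the Girsanov decoupling introduced in Section~\ref{CoupledDyLocalSt}. First I would fix $\epsilon > 0$ and apply Theorem~\ref{existenceApprCoupledDynLocalState}(iii) to produce a piece-wise constant, non-anticipative, decentralized policy ${\bf U}^\epsilon = (U^{1,\epsilon},\dots,U^{N,\epsilon}) \in \Gamma_{DWS}$ that is $\epsilon$-optimal for the continuous-time cost (\ref{newCostTiltedMeasureDec}), i.e.\ $J({\bf U}^\epsilon) \le \inf_{{\bf U}} J({\bf U}) + \epsilon$. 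Feeding such a piece-wise constant policy into the dynamics (\ref{sdeDecLoc1}) over each interval $[kh,(k+1)h)$ yields exactly the sampled coupled model (\ref{DTmodelCTConDecCoup1}), hence a discrete-time model $({\bf X}_n,{\cal T}_n)$ with total cost $J_n$; by construction the interpolated cost of ${\bf U}^\epsilon$ equals the continuous-time cost, so $J_n({\bf U}^\epsilon) = J({\bf U}^\epsilon)$.

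Next I would run the sandwich argument as in Theorem~\ref{TNearOptimFull}. Writing $J_n^*$ for the optimal discrete-time value, monotonicity gives $J_n^* \le J_n({\bf U}^\epsilon) = J({\bf U}^\epsilon) \le \inf_{{\bf U}} J({\bf U}) + \epsilon$, so $\limsup_n J_n^* \le \inf_{{\bf U}} J({\bf U}) + \epsilon$. Conversely, every admissible discrete-time policy corresponds, through its piece-wise constant interpolation, to an element of $\Gamma_{DWS}$, so restricting to piece-wise constant controls cannot beat arbitrary admissible controls and $J_n^* \ge \inf_{{\bf U}} J({\bf U})$. Since $\epsilon > 0$ is arbitrary, $\lim_n J_n^* = \inf_{{\bf U}} J({\bf U})$.

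For the constructive statement, I would let ${\bf v}^{n*}$ be an optimal control for the discretized model and ${\bf v}^{n*}(\cdot)$ its continuous-time interpolation, and bound $|\inf_{{\bf U}} J({\bf U}) - J({\bf v}^{n*}(\cdot))| \le |\inf_{{\bf U}} J({\bf U}) - J_n^*| + |J_n^* - J({\bf v}^{n*}(\cdot))|$. The first term vanishes by the paragraph above, and since $J_n^* = J_n({\bf v}^{n*})$ is precisely the discretization of $J({\bf v}^{n*}(\cdot))$, the two share the same limit, giving $\lim_n J({\bf v}^{n*}(\cdot)) = \inf_{{\bf U}} J({\bf U})$. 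Here the continuity in the policy of the exponential Radon-Nikodym derivative carrying the coupling $b_0^i$, established exactly as in Lemma~\ref{SomnathLemma}, is what guarantees that the discretized and interpolated costs converge to a common value.

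The main obstacle, and the point where this argument genuinely departs from the fully observed case, is enforcing the decentralized constraint that $U^i_{kh}$ depend only on the local state history $X^i_{[0,kh]}$ while still producing a discrete-time model with genuinely discrete-time measurements. The plan is to first apply Girsanov to the sampled coupled model (\ref{DTmodelCTConDecCoup1}) to pass to the decoupled model (\ref{DTmodCTContApA}), under which each local state is an exogenous diffusion $X^i_t = X^i_{kh} + \int_{kh}^t \sigma^i(X^i_s)\,dB^i_s$ and the local measurements are independent of the actions, as in the partially observed treatment preceding Theorem~\ref{TNearOptimPartial}. Then, as there, I would invoke Lusin's theorem to write each measurable local policy $\gamma^i$ as a continuous policy $\gamma_c^i$ off a set of measure $\epsilon_i$, together with the piece-wise constant interpolation of the path-valued measurement $X^i_{[0,kh]}$; continuity of the running and terminal costs then makes the cost under $(\gamma^1,\dots,\gamma^N)$ with continuous-time measurements within $O\bigl((N_h+1)\,\norm{c}_{\infty}\,\max_i \epsilon_i\bigr)$ of its continuous approximation with discrete-time samples. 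This yields the decentralized sampled policies of the statement and completes the proof.
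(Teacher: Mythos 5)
Your proposal is correct and follows essentially the same route as the paper: the paper's proof of Theorem~\ref{TNearOptimDecenLocal} is precisely the sandwich argument of Theorem~\ref{TNearOptimFull} applied to the piece-wise constant policies furnished by Theorem~\ref{existenceApprCoupledDynLocalState}, combined with the Girsanov decoupling to (\ref{DTmodCTContApA}) and the Lusin/piece-wise interpolation step described in the text preceding the theorem. You have simply written out in full the details that the paper leaves implicit.
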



\section{Conclusion}
We presented existence and discrete-time approximation results on optimal control policies for continuous-time stochastic control problems under a variety of information structures. These include fully observed models, partially observed models and decentralized information structures. While there exist results for the fully observed setup, the discrete-time approximation results for partially observed and decentralized models appear to be new. These results lead to the applicability of well-established partially observed Markov decision processes and the relatively more mature theory of discrete-time decentralized stochastic control to be applicable for computing near optimal solutions and also facilitate learning theoretic approaches.


\section{Appendix}
\textbf{Proof of Lemma~\ref{SomnathLemma}:}
Let $m^n \to m$ in $\Urc$\,.\\

{\bf Step 1.}
First note that for any $a, b \in \mathbb{R}$
\begin{equation}\label{ESOMEST}
e^b - e^a = \int_a^b e^{x}dx \leq |b-a| \max(e^b,e^a).
\end{equation}
Thus, 
\begin{align}
& |e^{(\int_0^T \upsigma^{-1}(X_s)b(X_s,m^n_s)dW_s )} - e^{(\int_0^T \upsigma^{-1}(X_s)b(X_s,m_s)dW_s )}|  \nonumber \\
& \leq | \int_0^T \upsigma^{-1}(X_s)b(X_s,m^n_s)dW_s - \int_0^T \upsigma^{-1}(X_s)b(X_s,m_s)dW_s|  \nonumber \\
& \quad \quad \times \max\bigg(e^{(\int_0^T \upsigma^{-1}(X_s)b(X_s,m^n_s)dW_s)},e^{(\int_0^T \upsigma^{-1}(X_s)b(X_s,m_s)dW_s)}\bigg)
\end{align}
Taking the expectation and by the Cauchy-Schwarz inequality:
\begin{align}\label{step1BoundUnif}
& \Exp_{x}\bigg[\bigg|e^{(\int_0^T \upsigma^{-1}(X_s)b(X_s,m^n_s)dW_s )} - e^{(\int_0^T \upsigma^{-1}(X_s)b(X_s,m_s)dW_s )}\bigg|\bigg]^2  \nonumber \\
& \leq \Exp\bigg[| \int_0^T \upsigma^{-1}(X_s)b(X_s,m^n_s)dW_s - \int_0^T \upsigma^{-1}(X_s)b(X_s,m_s)dW_s|^2\bigg]  \nonumber \\
& \quad \quad \Exp\bigg[|\max\bigg(e^{(\int_0^T \upsigma^{-1}(X_s)b(X_s,m^n_s)dW_s)},e^{(\int_0^T \upsigma^{-1}(X_s)b(X_s,m_s)dW_s)}\bigg)|^2 \bigg]
\end{align}

{\bf Step 2.}
The final term on the right in (\ref{step1BoundUnif}) is bounded, uniformly, over control policies. To see this, we will consider
\begin{align}\label{eqnToBeStudied}
e^{(\int_0^T \upsigma^{-1}(X_s)b(X_s,m_s)dW_s)}
\end{align}
as the solution of the following stochastic differential equation. We first define
\begin{align}
 d\hat{X}_s :=  \upsigma^{-1} ( X_s )b(X_s,m_s)dW_s 
 \end{align}
\[Y_s:=e^{\hat{X}_s},\]
which is then the process (\ref{eqnToBeStudied}) we want to study; in particular we wish to show that $\Exp[Y^2]$ is uniformly bounded over control policies. By It\^o,
\begin{align*}
dY = \frac{dY}{d\hat{X}} d\hat{X} + \frac{1}{2} \frac{d^2Y}{d\hat{X}^2} d\hat{X}\cdot d\hat{X} 
\end{align*} Thus we obtain
\begin{align}
dY_s &= e^{\hat{X}_s} (\upsigma^{-1}(X_s)b(X_s,m_s)dW_s) + \frac{1}{2} e^{\hat{X}_s}|\upsigma^{-1}(X_s)b(X_s,m_s)|^2 ds \nonumber \\
&= Y_s  \bigg(\upsigma^{-1}(X_s)b(\hat{X}_s,m_s)dW_s + \frac{1}{2}|\upsigma^{-1}(X_s)b(X_s,m_s)|^2ds \bigg) 
\end{align}

Now, by another It\^o,
\begin{align}
d(Y^2_s) & = 2Y_s dY_s + dY_s\cdot dY_s \nonumber\\ 
& = 2Y^2_s \bigg(\upsigma^{-1}(\hat{X}_s)b(\hat{X}_s,m_s)dW_s + \frac{1}{2}|\upsigma^{-1}(X_s)b(X_s,m_s)|^2ds \bigg) + Y^2_s|\upsigma^{-1}(X_s)b(X_s,m_s)|^2ds \nonumber \\
& = 2Y_s^2 \upsigma^{-1}(\hat{X}_s)b(\hat{X}_s,m_s)dW_s + 3Y_s^2 |\upsigma^{-1}(X_s)b(X_s,m_s)|^2 ds\,.  \nonumber
\end{align}
Thus, $V = Y^2$ solves the stochastic differential equation
\begin{align*}
dV_s &=  3V_s|\upsigma^{-1}(X_s)b(X_s,m_s)|^2ds +   2V_s \upsigma^{-1}(X_s)b(X_s,m_s)dW_s  \,.
\end{align*}
Thus, $V$ is so that it is bounded almost surely for finite intervals and its expectation is also finite by \cite[Theorem 2.3.1]{kushner2012weak} (also see \cite[Theorem~2.2.2]{ABG-book}). Thus, 
\[\Exp\bigg[|\max\bigg(e^{(\int_0^T \upsigma^{-1}(X^n_s)b(X^n_s,m^n_s)dW_s)},e^{(\int_0^T \upsigma^{-1}(X_s)b(X_s,m_s)dW_s)}\bigg)|^2 \bigg]\]
is uniformly bounded over all control policies.

%
%
%
%
%

{\bf Step 3}.
By (\ref{step1BoundUnif}), it then suffices to show that the first term $\Exp\bigg[ \bigg| \int_0^T \upsigma^{-1}(X_s)b(X_s,m_s)dW_s \bigg|^2\bigg]$ is continuous in the policy. This follows from the It\^o isometry and the proof steps presented in the proof of Theorem \ref{contConvPathM}. Since $\upsigma^{-1}b$ is bounded and continuous, in view of (\ref{step1BoundUnif}), 
$e^{- \frac{1}{2} \int_0^T |\upsigma^{-1}(X_s)b(X_s,m^n_s)|^2 ds}$ converges to $e^{- \frac{1}{2} \int_0^T |\upsigma^{-1}(X_s)b(X_s,m_s)|^2 ds}$ in $L^1$ as $n\to \infty$\,.\qed

\bibliographystyle{plain}
\bibliography{Quantization,SerdarBibliography}

\end{document}